\documentclass[12pt]{amsart}
\topmargin -15mm
\textheight 24truecm
\textwidth 16truecm
\oddsidemargin 5mm
\evensidemargin 5mm
\usepackage{amsfonts, amssymb}
\usepackage{amsmath}
\usepackage{amsthm}
\usepackage{cite}
\numberwithin{equation}{section}

\newtheorem{theorem}{Theorem}[section]
\newtheorem{lemma}[theorem]{Lemma}

\begin{document}

\title[Bank--Laine functions]{Zero distribution of finite order Bank--Laine functions}

\author{Yueyang Zhang}
\address{School of Mathematics and Physics, University of Science and Technology Beijing, No.~30 Xueyuan Road, Haidian, Beijing, 100083, P.R. China}
\email{zhangyueyang@ustb.edu.cn}
\thanks{The author is supported by the Fundamental Research Funds for the Central Universities~{(FRF-TP-19-055A1)} and a Project supported by the National Natural Science Foundation of China~{(12301091)}}

\subjclass[2010]{Primary 34A20; Secondary 30D15}

\keywords{Linear differential equation; Bank--Laine function; Bank--Laine conjecture; Quasimeromorphic function; Quasiconformal surgery}

\date{\today}

\commby{}

\begin{abstract}
It is known that a Bank--Laine function $E$ is a product of two normalized solutions of the second order differential equation $f''+Af=0$ $(\dag)$, where $A=A(z)$ is an entire function. By using Bergweiler and Eremenko's method of constructing transcendental entire function $A(z)$ by gluing certain meromorphic functions with infinitely many times, we show that, for each $\lambda\in[1,\infty)$ and each $\delta\in[0,1]$, there exists a Bank--Laine function $E$ such that $E=f_1f_2$ with $f_1$ and $f_2$ being two entire functions such that $\lambda(f_1)=\delta\lambda$ and $\lambda(f_2)=\lambda$, respectively. We actually provide a complete construction of the Bank--Laine functions given by Bergweiler and Eremenko.

\end{abstract}

\maketitle

%\newpage

\section{Introduction}\label{intro} % use lowercase except for proper names

An entire function $E(z)$ is called a \emph{Bank--Laine function} if $E(z)=0$ implies that $E'(z)\in\{-1,1\}$. In particular, $E(z)$ is called \emph{special} if $E(z)=0$ implies that $E'(z)=1$. It is known that every Bank--Laine function $E$ is written as $E=f_1f_2$, where $f_1$ and $f_2$ are two normalized solutions of the second order differential equation
\begin{equation}\label{bank-laine0}
f''+A(z)f=0,
\end{equation}
where $A(z)$ is an entire function; see \cite[Proposition~6.4]{Laine1993}. All solutions of equation \eqref{bank-laine0} are entire functions. A pair $(f_1,f_2)$ of solutions of \eqref{bank-laine0} is called \emph{normalized} if $f_1$ and $f_2$ are linearly independent and the \emph{Wronskian determinant} $W(f_1,f_2)=f_1f_2'-f_1'f_2$, which is a nonzero constant, is equal to $1$. Denote $F=f_2/f_1$. Then $F$ is a locally univalent meromorphic function. The quotient $F/F'=f_1f_2$ is a Bank--Laine function, and all Bank--Laine functions arise in this way. Note that a normalized pair $(f_1,f_2)$ can be recovered from $F$ by the formulas $f_1^2=1/F'$ and $f_2^2=F^2/F'$. Zeros of a Bank--Laine function $E$ are zeros and poles of $F$, and $E$ is special if and only if $F$ is entire. See~\cite{Laine1993} for the basics about the second order differential equation \eqref{bank-laine0}.

Bank and Laine~\cite{Banklaine1982,Banklaine1982-2} initiated the study on the complex oscillation of second order linear differential equation \eqref{bank-laine0} in the framework of Nevanlinna theory. See \cite{Hayman1964Meromorphic,Laine1993} for the basic notation and fundamental results of Nevanlinna theory. Recall that the \emph{order} of an entire function $f$ is defined as
\begin{equation*}
\rho(f)=\limsup_{r\to\infty}\frac{\log\log M(r,f)}{\log r},
\end{equation*}
where $M(r,f)$ is the maximum modulus of $f$ on the circle $|z|=r$. When $A$ is transcendental, an application of the lemma on the logarithmic derivative easily yields that all nontrivial solutions of \eqref{bank-laine0} satisfy $\rho(f)=\infty$. Also recall that the \emph{exponent of convergence} of zeros of an entire function $f$ is defined as
\begin{equation*}
\lambda(f)=\limsup_{r\to\infty}\frac{\log n(r,0,f)}{\log r},
\end{equation*}
where $n(r,0,f)$ denotes the number of zeros of $f$ in the disc $\{z: |z|\leq r\}$. Bank and Laine \cite{Banklaine1982,Banklaine1982-2} proved: For two normalized solutions $f_1$ and $f_2$ of equation \eqref{bank-laine0} and $E=f_1f_2$, if $\rho(A)\not\in \mathbb{N}$, then $\lambda(E) \geq \rho(A)$; if $\rho(A)<1/2$, then $\lambda(E)=\infty$. Later, Shen~\cite{Shen1985} and Rossi~\cite{Rossi1986} relaxed the condition $\rho(A)<1/2$ to the case $\rho(A)=1/2$ independently, where they actually showed that if $1/2\leq \rho(A)<1$, then the conclusion $\lambda(E) \geq \rho(A)$ can be improved to satisfy the inequality
\begin{equation}\label{bank-laine1}
\frac{1}{\rho(A)}+\frac{1}{\lambda(E)}\leq 2.
\end{equation}
Based on these results, Bank and Laine conjectured that $\max\{\lambda(f_1),\lambda(f_2)\}=\infty$ whenever $\rho(A)\not\in \mathbb{N}$; see also \cite[Chapter~4]{Laine1993}. \emph{The Bank--Laine conjecture} has attracted much interest; see the surveys \cite{Gundersen2014,lainetohge2008} and references therein. However, Bergweiler and Eremenko \cite{Bergweilereremenko2017,Bergweilereremenko2019} disproved this conjecture by constructing counterexamples for the coefficient $A$ such that $\rho(A)$ is not an integer and equation \eqref{bank-laine0} has two linearly independent solutions such that $\lambda(f_1)<\infty$ and $f_2$ has no zeros.

All Bank--Laine functions have order of growth at least $1$. Bergweiler and Eremenko's construction shows for the first time that there exists a special Bank--Laine function $E$ such that $\lambda(E)=\lambda$ for any $\lambda\in[1,\infty)$. Though the construction is very sophisticated, Bergweiler and Eremenko have started from a family of locally univalent meromorphic functions $F=f_2/f_1$ with $f_1$ and $f_2$ being two normalized solutions such that $\lambda(f_i)<\infty$, $i=1,2$, of the second order differential equation
\begin{equation}\label{Bank-laine se3simple}
f''-\left(\frac{1}{4}e^{2z}+\frac{1}{2}b_1e^{z}+b_2\right)f=0,
\end{equation}
where $b_1\not=0$ and $b_2$ are two constants. Indeed, all nontrivial solutions of the equation $f''+(e^{z}-K)f=0$ such that $\lambda(f)<\infty$ have been clearly characterized in~\cite{Banklaine1983-1}; see also \cite[Theorem~5.22]{Laine1993}. In \cite{zhang2021,zhang2021-1,Zhang2022}, the present author solves nontrivial solutions such that $\lambda(f)<\infty$ of equation \eqref{Bank-laine se3simple} and, in particular, proved the following

\begin{theorem}[\cite{Zhang2022}]\label{maintheorem1}
Let $b_1\not=0$ and $b_2$ be two constants. Suppose that~\eqref{Bank-laine se3simple} has a nontrivial solution~$f$ such that $\lambda(f)<\infty$. Then $f=\kappa e^{h}$, $\kappa=\sum_{i=0}^ka_ie^{iz}$ and $h=c_0e^{z}+cz$, where $k\geq 0$ is an integer, $c_0$ and $c$ are constants such that $c_0^2=1/4$, $2c_0(c+k)+c_0=b_1/2$ and $c^2=b_2$, and $a_0$, $\cdots$, $a_k$ are constants such that $a_0a_k\not=0$ and
\begin{equation*}%\label{recuree1}
\begin{split}
2c_0(k+1-i)a_{i-1}=(2ic+i^2)a_i, \qquad i=1,\cdots,k.
\end{split}
\end{equation*}

\end{theorem}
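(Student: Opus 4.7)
The plan is to factor $f$ according to its zero set and then peel off the exponential part. Since $A$ is transcendental of order one, every nontrivial solution of \eqref{Bank-laine se3simple} satisfies $\rho(f)=\infty$. Combined with the hypothesis $\lambda(f)<\infty$, Hadamard factorization yields a representation $f=Pe^{g}$, where $P$ is the canonical product formed from the zeros of $f$ (so $\rho(P)=\lambda(f)<\infty$) and $g$ is a transcendental entire function. Substituting this into $f''=Af$ and dividing by $f$ produces
\begin{equation*}
\frac{P''}{P}+2\frac{P'}{P}\,g'+g''+(g')^{2}=\frac{1}{4}e^{2z}+\frac{b_{1}}{2}e^{z}+b_{2}.
\end{equation*}

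The heart of the argument is to show that $g'=c_{0}e^{z}+c$ for some constants with $c_{0}^{2}=1/4$. Since $T(r,P'/P)=O(r^{\lambda(f)+\varepsilon})$ grows polynomially in $r$ while $T(r,A)=\tfrac{2r}{\pi}+O(\log r)$, the infinite-order part of the displayed identity must come entirely from $g'$, $g''$ and $(g')^{2}$. Matching leading exponents forces $(g')^{2}\sim \tfrac{1}{4}e^{2z}$, so $g'=c_{0}e^{z}+u$ with $c_{0}^{2}=1/4$ and $u$ entire of strictly smaller growth. Reinserting this decomposition cancels the $e^{2z}$ contribution and leaves an identity whose residual right-hand side is an exponential polynomial of degree at most one in $e^{z}$, perturbed by terms of polynomial Nevanlinna growth coming from $P$. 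Iterating Clunie/Wiman--Valiron type estimates then bootstraps $u$ first to a polynomial in $z$ and finally to a constant $c$, so that $g=c_{0}e^{z}+cz+C$; the additive constant $C$ is absorbed into $P$.

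With $g$ determined, the displayed identity reduces to a linear second-order ODE for $P$ whose coefficients are polynomials in $e^{z}$ of degree at most one. The substitution $w=e^{z}$ transforms it into a Kummer confluent hypergeometric equation in $w$; for $P$ to be entire of finite order, the branching Kummer solution must be discarded and the remaining parameter must be a nonpositive integer $-k$, forcing $P=\sum_{i=0}^{k}a_{i}e^{iz}$ with $a_{0}a_{k}\neq 0$. Equating coefficients of $e^{jz}$ in the ODE then yields $c^{2}=b_{2}$ from the $j=0$ relation, $2c_{0}(c+k)+c_{0}=b_{1}/2$ from the $j=k+1$ relation, and the stated recursion $2c_{0}(k+1-i)a_{i-1}=(2ic+i^{2})a_{i}$ for $i=1,\dots,k$ from the intermediate indices.

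The main obstacle is the middle paragraph: rigorously upgrading the asymptotic statement $(g')^{2}\sim\tfrac{1}{4}e^{2z}$ to the exact identity $g'=c_{0}e^{z}+c$, ruling out every additional entire transcendental contribution to $g'-c_{0}e^{z}$. One must exploit the finite list of exponential scales $1$, $e^{z}$, $e^{2z}$ present in $A$, together with the polynomial-order bounds on $T(r,P'/P)$ inherited from $\lambda(f)<\infty$, to force each successive remainder to drop below the next scale; it is precisely the rigidity of the polynomial-in-$e^{z}$ structure of the coefficient $A$ that makes this bootstrap terminate.
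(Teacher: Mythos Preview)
The paper does not prove Theorem~\ref{maintheorem1}; it is quoted verbatim from \cite{Zhang2022} and used as input for the constructions that follow. There is therefore no ``paper's own proof'' to compare against here.

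As for your outline on its own merits: the overall strategy (Hadamard factorization $f=Pe^{g}$, identification of $g'$, reduction of the residual equation for $P$ to a confluent hypergeometric equation in $w=e^{z}$, and extraction of the recursion) is the standard route and is in the spirit of the arguments in \cite{Zhang2022,zhang2021,zhang2021-1}. The genuine gap is precisely the one you flag yourself: you do not actually carry out the step from $(g')^{2}\sim\tfrac14 e^{2z}$ to the exact form $g'=c_{0}e^{z}+c$. Writing $g'=c_{0}e^{z}+u$ and asserting that ``Clunie/Wiman--Valiron type estimates bootstrap $u$ first to a polynomial and then to a constant'' is a description of what one hopes will happen, not a proof; the cross term $2c_{0}e^{z}u$ and the term $2(P'/P)g'$ both live at the $e^{z}$ scale, and one has to argue carefully (e.g.\ via Nevanlinna estimates on exponential polynomials, or a Tumura--Clunie argument applied to the auxiliary function $u$) that no transcendental $u$ of lower growth can survive. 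Similarly, the conclusion that $P$ must be a polynomial in $e^{z}$ rather than a more general finite-order entire function requires more than invoking the Kummer equation: you need to explain why the finite-order hypothesis on $P$ forces the polynomial branch and rules out the logarithmic/second-kind solutions after the substitution $w=e^{z}$. Until those two points are filled in, what you have is a credible plan rather than a proof.
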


Note that equation \eqref{Bank-laine se3simple} takes a slightly different form from that in \cite{Zhang2022}. By Theorem~\ref{maintheorem1} we give a different formulation from the results in~\cite[Theorem~1.6]{ChiangIsmail2006} which states that: Equation \eqref{Bank-laine se3simple} admits two linearly independent solutions $f_1$ and $f_2$ such that $\max\{\lambda(f_1),\lambda(f_2)\}<\infty$ if and only if there are two distinct nonnegative integers $k_1,k_2$ such that $b_1=\pm(k_1-k_2)$ and $4b_2=4c^2=(k_1+k_2+1)^2$. In particular, it is possible that $\min\{\lambda(f_1),\lambda(f_2)\}=0$. Letting $c=-(k_1+k_2+1)/2$, then equation \eqref{Bank-laine se3simple} takes the form
\begin{equation}\label{Bank-laine se3simpletr}
f''-\left[\frac{1}{4}e^{2z}+\frac{1}{2}(k_2-k_1)e^{z}+\frac{(k_1+k_2+1)^2}{4}\right]f=0.
\end{equation}
Choosing $c_0=-1/2$, we may write the two linearly independent solutions $f_1$ and $f_2$ as
\begin{equation*}%\label{recuree1fu1}
\begin{split}
f_1=\kappa_1e^{-\frac{k_1+k_2+1}{2}z}e^{-\frac{1}{2}e^{z}}=\left(\sum_{i=0}^{k_1}a_ie^{iz}\right)e^{-\frac{k_1+k_2+1}{2}z}e^{-\frac{1}{2}e^{z}},
\end{split}
\end{equation*}
where
\begin{equation*}%\label{recuree1fu2}
\begin{split}
-(k_1+1-i)a_{i-1}=(2ic+i^2)a_i, \qquad i=1,\cdots,k_1
\end{split}
\end{equation*}
and
\begin{equation*}%\label{recuree1fu3}
\begin{split}
f_2=\kappa_2e^{-\frac{k_1+k_2+1}{2}z}e^{\frac{1}{2}e^{z}}=\left(\sum_{j=0}^{k_2}b_je^{jz}\right)e^{-\frac{k_1+k_2+1}{2}z}e^{\frac{1}{2}e^{z}},
\end{split}
\end{equation*}
where
\begin{equation*}%\label{recuree1fu4}
\begin{split}
(k_2+1-j)b_{j-1}=(2jc+j^2)b_j, \qquad j=1,\cdots,k_2.
\end{split}
\end{equation*}
Suppose that $(f_1,f_2)$ is a normalized pair of solutions of \eqref{Bank-laine se3simpletr}. It is easy to see that $a_0$ and $b_0$ should satisfy $a_{k_1}b_{k_2}=\frac{k_1!k_2!}{[(k_1+k_2)!]^2}a_0b_0=1$ and a simple calculation shows that
\begin{equation*}
\left\{
  \begin{array}{ll}
    a_i=\frac{k_2!}{(i+k_2)!}a_0,& \qquad  i=1,\cdots,k_1,\\
    b_j=\frac{(-1)^jk_1!}{(j+k_1)!}b_0,& \qquad j=1,\cdots,k_2.
  \end{array}
\right.
\end{equation*}
Let
\begin{equation*}
\begin{split}
F=\frac{f_2}{f_1}=\left(\frac{\sum_{j=0}^{k_2}b_je^{jz}}{\sum_{i=0}^{k_1}a_ie^{iz}}\right)e^{e^{z}}.
\end{split}
\end{equation*}
We choose $a_0=b_0$. Let $A_0=B_0=1$ and $A_i=a_i/a_0$ and $B_j=b_j/b_0$. For every pair $(m,n)$ of two non-negative integers $m$ and $n$, we consider the rational function
\begin{equation}\label{recuree1pm8}
\begin{split}
R_{m,n}(z)=\frac{P_n(z)}{Q_m(z)}=\frac{\sum_{j=0}^{2n}B_jz^j}{\sum_{i=0}^{m}A_iz^i}.
\end{split}
\end{equation}
Thus we may write
\begin{equation*}
\begin{split}
h_{m,n}(z)=R_{m,n}(z)e^{z}=\left(\frac{\sum_{j=0}^{2n}B_jz^j}{\sum_{i=0}^{m}A_iz^i}\right)e^{z}.
\end{split}
\end{equation*}
It follows that $A_{m}B_{2n}=\frac{m!(2n)!}{[(m+2n)!]^2}=\frac{1}{\binom{m+2n}{m}(m+2n)!}$.
Then the meromorphic (entire when $m=0$) function
\begin{equation}\label{recuree1pm9}
\begin{split}
g_{m,n}(z)=h_{m,n}(e^z)=\left(\frac{\sum_{j=0}^{2n}B_je^{jz}}{\sum_{i=0}^{m}A_ie^{iz}}\right)e^{e^{z}}
\end{split}
\end{equation}
satisfies
\begin{equation}\label{recuree1pm10}
\begin{split}
g'_{m,n}(z)=\frac{1}{\binom{m+2n}{m}(m+2n)!}\frac{e^{e^{z}+(m+2n+1)z}}{\left(\sum_{i=0}^{m}A_ie^{iz}\right)^2}.
\end{split}
\end{equation}
Since $A_j$ are all positive constants, $g'_{m,n}(z)\not=0$ for all $z\in \mathbb{C}$ and $g_{m,n}$ is increasing on $\mathbb{R}$, and satisfies $g_{m,n}(x)\to 1$ as $x\to -\infty$ as well as $g_{m,n}(x)\to +\infty$ as $x\to +\infty$. Bergweiler and Eremenko's construction starts from the functions $g_{m,n}(z)$ in \eqref{recuree1pm9} with $m=0$ and two or infinitely many different even numbers $2n$. We shall consider the generic choice of $m$, $n$ and prove the following

\begin{theorem}\label{maintheorem2}
For every $\lambda\in[1,\infty)$ and every $\delta\in[0,1]$, there exists a Bank--Laine function $E$ such that  $\lambda(E)=\rho(E)=\lambda$ and $E=f_1f_2$ for two entire functions $f_1$ and $f_2$ such that $\lambda(f_1)=\delta\lambda$ and $\lambda(f_2)=\lambda$.
\end{theorem}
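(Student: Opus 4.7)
The plan is to extend Bergweiler and Eremenko's gluing construction from the case $m=0$ (in which $f_1$ is forced to be zero-free) to generic $m\ge 0$, so that the $m$ poles and the $2n$ zeros of $g_{m,n}$ per vertical period of $2\pi$ can be used to prescribe the exponents of convergence of $f_1$ and $f_2$ simultaneously. The starting point is that each $g_{m,n}$ in \eqref{recuree1pm9} is locally univalent, by \eqref{recuree1pm10}, and serves as the quotient $F=f_2/f_1$ for an explicit normalized pair of solutions of \eqref{Bank-laine se3simpletr}; hence each $g_{m,n}$ is already a legitimate local model for the function $F$ associated to equation \eqref{bank-laine0}.

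First I would fix a rapidly increasing sequence of radii $R_k\to\infty$ and integers $(m_k,n_k)$ so that, after rescaling and placing a copy of $g_{m_k,n_k}$ in the annulus $R_{k-1}<|z|<R_k$ via a logarithmic change of variables, the cumulative counting functions for zeros and poles of the assembled function grow like $r^{\lambda}$ and $r^{\delta\lambda}$ respectively. Since each piece supplies $m_k$ poles and $2n_k$ zeros per period of height $2\pi$, one can arrange $(m_k,n_k)$ and the geometric ratios $R_k/R_{k-1}$ to hit both prescribed exponents for every $\lambda\in[1,\infty)$ and $\delta\in[0,1]$ (the boundary case $\delta=0$ recovers \cite{Bergweilereremenko2019} and $\delta=1$ uses $m_k\asymp n_k$). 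Inside a single annulus no surgery is required because $g_{m_k,n_k}$ is already meromorphic and locally univalent.

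Second, I would glue consecutive pieces across thin annular transition zones by quasiconformal interpolation, following Bergweiler and Eremenko's scheme. This produces a locally univalent quasimeromorphic function $\widetilde F\colon\mathbb{C}\to\widehat{\mathbb{C}}$ whose Beltrami coefficient $\mu$ is supported on the transition zones. Choosing these zones sufficiently thin, the measurable Riemann mapping theorem yields a quasiconformal homeomorphism $\varphi\colon\mathbb{C}\to\mathbb{C}$ with dilatation $\mu$ satisfying $|\varphi(z)|=|z|^{1+o(1)}$ as $|z|\to\infty$. Then $F=\widetilde F\circ\varphi^{-1}$ is locally univalent and meromorphic on $\mathbb{C}$; the normalized pair defined by $f_1^2=1/F'$ and $f_2^2=F^2/F'$ then solves \eqref{bank-laine0} for the entire coefficient $A$ determined by the Schwarzian of $F$, and $E=f_1f_2=F/F'$ is the desired Bank--Laine function. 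Since $\varphi$ is asymptotically conformal, the zero and pole counting functions of $F$ at radius $r$ are comparable to those built into $\widetilde F$, giving $\lambda(f_1)=\delta\lambda$, $\lambda(f_2)=\lambda$, and hence $\lambda(E)=\lambda$; the matching upper bound $\rho(E)\le\lambda$ will follow from standard Ahlfors--Shimizu characteristic estimates, as in \cite{Bergweilereremenko2019}.

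The main obstacle is the quasiconformal surgery across the transition annuli when $m_k\ge 1$. In the original case $m=0$ each model is zero-free in the finite plane, so the gluing has to track only the zeros of $f_2$; in the present generic situation one must simultaneously reconcile the positions of both zeros and poles of $\widetilde F$ at consecutive scales, preserving local univalence throughout the transition zones and keeping $\mu$ small enough that $\varphi$ distorts radii by only $r^{o(1)}$. This will require a careful alignment of the critical data of the rescaled pieces $g_{m_k,n_k}$ and $g_{m_{k+1},n_{k+1}}$ before interpolation, together with a verification, analogous to \cite{Bergweilereremenko2019}, that the transition zones contribute negligibly to every counting function of interest.
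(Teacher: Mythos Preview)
Your overall plan---extend Bergweiler--Eremenko's surgery from $m=0$ to generic $m\ge 0$ so that the poles of $g_{m,n}$ supply the zeros of $f_1$---is exactly the paper's strategy, and the concluding steps (measurable Riemann mapping, $F=\widetilde F\circ\varphi^{-1}$, $E=F/F'$, counting functions transferred through $\varphi$) are the same. Two points, however, deserve correction.

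First, your geometric setup is not the one that is actually carried out. The paper does not place models in round annuli; following \cite{Bergweilereremenko2019} it defines $U$ on horizontal half-strips $\Pi_k^{+}$ in the right half-plane, $V$ on wider half-strips in the left half-plane, interpolates along horizontal lines via the real diffeomorphisms $\psi_k$ determined by $g_{m_{k+1},n_{k+1}}(x)=g_{m_k,n_k}(\psi_k(x))$, and only then composes with $z\mapsto z^{\rho}$ and $z\mapsto -(-z)^{\sigma}$. The ``transition zones'' are the full strips, not thin annuli, and the dilatation is controlled not by thinness but by sharp estimates on $\psi_k-\mathrm{id}$ and $\psi_k'-1$ (Lemmas~\ref{Lemma3}--\ref{Lemma5}).

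Second, and this is the genuine gap, your parenthetical ``$\delta=1$ uses $m_k\asymp n_k$'' would break the construction. The error term $R(y,N)$ in Lemma~\ref{Lemma3} and the subsequent estimates for $\phi$ are controlled by $m_k/N_k$, and the Teichm\"uller--Wittich--Belinskii integrability condition needs this ratio to be summable against $1/k$. If $m_k\asymp n_k$ the dilatation of the glued map does not satisfy $\int_{|z|>r}(K_G-1)/|z|^2<\infty$, and you cannot conclude $\varphi(z)\sim z$. The paper resolves this by inserting a damping factor $\alpha(x)=(\log(x+2\pi))^{-2}$: for $\delta\gamma>1$ one takes $m_k\sim \alpha(2\pi k)\,\delta\gamma(2\pi k)^{\delta\gamma-1}$ (Lemma~\ref{Lemma2}), so that $m_k/(2n_k+1)=O((\log k)^{-2})$ as in \eqref{recuree1pm23bea1}. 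This still yields $\lambda(f_1)=\delta\lambda$, since the $\limsup$ defining $\lambda$ is insensitive to logarithmic factors, but it forces $n(r,0,f_1)\asymp (\log r)^{-2}r^{\rho\gamma}$ rather than $\asymp r^{\rho\gamma}$, a defect the paper explicitly flags as apparently unavoidable by this method.
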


In section~\ref{Proof of maintheorem2} we actually follow the process in \cite{Bergweilereremenko2019} to prove the equivalent statement: Let $\rho\in(1/2,1]$ and $\gamma=1/(2\rho-1)$ and $\delta\in[0,1]$. Then there exists an entire function $A$ of order $\rho(A)=\rho$ such that the differential equation \eqref{bank-laine0} has two linearly independent solutions $f_1$ and $f_2$ such that $E=f_1f_2$ satisfies $\lambda(E)=\rho\gamma$ and $f_1$ and $f_2$ satisfy $\lambda(f_1)=\delta\rho\gamma$ and $\lambda(f_2)=\rho\gamma$, respectively. In particular, when $\delta=0$ the function $f_1$ can be chosen to have no zeros so that the corresponding Bank--Laine function $E$ is special.

Note that solutions of \eqref{Bank-laine se3simple} provide such $f_1$ and $f_2$ in the case $\lambda=1$ and $\delta=0$ or $\delta=1$. For each fixed $\rho\in(1/2,1]$, we see that the two numbers $\rho$ and $\lambda$ satisfy $1/\rho+1/\lambda=2$. Thus, for each $\lambda\in[1,\infty)$, we may choose $\rho$ and $\delta$ suitably so that $f_1$ and $f_2$ have the desired properties in Theorem~\ref{maintheorem2}.

The associated \emph{Schwarzian differential equation} to each equation \eqref{bank-laine0} is given by
\begin{equation}\label{recuree1pm4}
\begin{split}
S(F):=\left(\frac{F''}{F'}\right)'-\frac{1}{2}\left(\frac{F''}{F'}\right)^2=\frac{F'''}{F'}-\frac{3}{2}\left(\frac{F''}{F'}\right)^2=2A,
\end{split}
\end{equation}
where the expression $S(F)$ is called the \emph{Schwarzian derivative} of $F$. The Schwarzian derivative is invariant under the linear fractional transformation of $F$. The general solution of \eqref{recuree1pm4} is $F=f_2/f_1$.
% and, on the other hand, a normalized pair $(f_1,f_2)$ can be recovered from $F$ by the formulas
%\begin{equation}\label{recuree1pm5}
%\begin{split}
%f_1^2=\frac{1}{F'}, \quad f_2^2=\frac{F^2}{F'}.
%\end{split}
%\end{equation}
To study the asymptotic behaviors of the functions $A(z)$ and $E(z)$, Bergweiler and Eremenko have used the fact that the Schwarzian derivative $S(F)$ can be factored as $S(F)=B(F/F')/2$, where
\begin{equation*}
\begin{split}
B(E):=-2\frac{E''}{E}+\left(\frac{E'}{E}\right)^2-\frac{1}{E^2}.
\end{split}
\end{equation*}
See Bank and Laine~\cite{Banklaine1982,Banklaine1982-2}. The general solution of the differential equation $B(E)=4A$, that is, of the equation
\begin{equation}\label{recuree1pm7}
\begin{split}
4A=-2\frac{E''}{E}+\left(\frac{E'}{E}\right)^2-\frac{1}{E^2},
\end{split}
\end{equation}
is a product of a normalized pair of solutions of \eqref{bank-laine0}. We shall also use the identity \eqref{recuree1pm7} to study the asymptotic behaviors of $A(z)$ and $E(z)$ in section~\ref{Proof of maintheorem3} and prove the following

\begin{theorem}\label{maintheorem3}
The functions $A$ and $E$ before can be chosen so that
\begin{equation}\label{recuree1pm3 fu2}
\begin{split}
\frac{1}{2}\log |A(re^{i\theta})|\sim \log \frac{1}{|E(re^{i\theta})|}\sim r^{\rho}\cos(\rho \theta) \quad \text{for} \quad |\theta|\leq (1-\varepsilon)\frac{\pi}{2\rho},
\end{split}
\end{equation}
while, with $\sigma=\rho/(2\rho-1)$,
\begin{equation}\label{recuree1pm3 fu3}
\begin{split}
\log |E(-re^{i\theta})|\sim r^{\sigma}\cos(\sigma \theta) \quad \text{for} \quad |\theta|\leq (1-\varepsilon)\frac{\pi}{2\sigma},
\end{split}
\end{equation}
and
\begin{equation}\label{recuree1pm3 fu4}
\begin{split}
|A(-re^{i\theta})|\sim \frac{\sigma^2}{4}r^{2\sigma-2} \quad \text{for} \quad |\theta|\leq (1-\varepsilon)\frac{\pi}{2\sigma},
\end{split}
\end{equation}
uniformly as $r\to\infty$, for any $\varepsilon\in(0,1)$.

\end{theorem}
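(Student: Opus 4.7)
The plan is to combine three ingredients: the explicit asymptotic form of $F=f_{2}/f_{1}$ coming from the construction in Section~\ref{Proof of maintheorem2}, the relation $E=F/F'$, and the Bank--Laine identity \eqref{recuree1pm7}, $4A=-2E''/E+(E'/E)^{2}-1/E^{2}$. Because $1/\rho+1/\sigma=2$, the two sectors $\Omega_{+}=\{|\arg w|\leq(1-\varepsilon)\pi/(2\rho)\}$ and $\Omega_{-}=\{|\arg(-w)|\leq(1-\varepsilon)\pi/(2\sigma)\}$ in which \eqref{recuree1pm3 fu2} and \eqref{recuree1pm3 fu3}--\eqref{recuree1pm3 fu4} assert asymptotics are complementary (up to a thin angular gap of total measure $2\varepsilon\pi$), and I will work in each sector separately.

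The main step is to extract from the construction the leading asymptotics of $F$. Away from the (thin) support of the Beltrami coefficient underlying the surgery, $F$ agrees with a conformal pull-back of one of the building blocks $g_{m,n}$ from~\eqref{recuree1pm9}, and the quasiconformal correction is asymptotically conformal by the standard estimates for normalized quasiconformal self-maps whose dilatation is supported on a set of small area. Tracking the conformal map that realizes the order-$\rho$ rescaling on the right and the order-$\sigma$ rescaling on the left yields, in $\Omega_{+}$,
\begin{equation*}
\log F(w)\sim e^{w^{\rho}},\qquad F'(w)/F(w)\sim \rho\, w^{\rho-1}e^{w^{\rho}},
\end{equation*}
and, in $\Omega_{-}$,
\begin{equation*}
\log E(w)\sim (-w)^{\sigma},\qquad E'(w)/E(w)\sim -\sigma(-w)^{\sigma-1}.
\end{equation*}

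The $E$-asymptotics then follow at once: in $\Omega_{+}$, $E=F/F'$ gives $\log(1/|E(re^{i\theta})|)=\log|F'(re^{i\theta})/F(re^{i\theta})|\sim r^{\rho}\cos(\rho\theta)$, the second $\sim$ in~\eqref{recuree1pm3 fu2}; in $\Omega_{-}$ the first displayed asymptotic is precisely~\eqref{recuree1pm3 fu3}. For $A$, I substitute the $E$-asymptotics into \eqref{recuree1pm7}. In $\Omega_{+}$, using $\log E\sim -w^{\rho}$, one finds $(E'/E)^{2}$ and $E''/E$ are of polynomial size $r^{2\rho-2}$, whereas $1/E^{2}$ is exponentially large of size $e^{2r^{\rho}\cos(\rho\theta)}$. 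Hence $4A\sim -1/E^{2}$ and $(1/2)\log|A|\sim\log(1/|E|)\sim r^{\rho}\cos(\rho\theta)$, the first $\sim$ in~\eqref{recuree1pm3 fu2}. In $\Omega_{-}$, $1/E^{2}$ is exponentially small and drops out; writing $E\sim\exp\psi$ with $\psi\sim(-w)^{\sigma}$, one has $-2E''/E+(E'/E)^{2}\sim -\psi'^{2}$ (the $\psi''$ contribution being of lower order $r^{\sigma-2}\ll r^{2\sigma-2}$), so $4A\sim -\sigma^{2}(-w)^{2\sigma-2}$ and \eqref{recuree1pm3 fu4} follows.

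The substantive difficulty is the first step: upgrading the pointwise identity $F=\hat F\circ\chi$ produced by the surgery to a $C^{2}$-asymptotic comparison with the explicit pull-back $\hat F$, which is what is required in order to differentiate $F$ twice in forming $E$, $E'$, $E''$. The key analytic input is the classical fact that, away from the (small) support of its dilatation, a normalized quasiconformal self-map of $\mathbb{C}$ is asymptotically conformal; combined with a Cauchy-type estimate on disks of radius shrinking slowly with $|w|$ (to pass from the pointwise asymptotics to $C^{2}$-asymptotics), this makes Step~1 rigorous. Once Step~1 is in place, the rest is the dominance calculus described above.
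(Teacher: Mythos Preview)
Your proposal is correct and follows essentially the same route as the paper: derive the asymptotics $\log\log F(z)\sim z^{\rho}$ in the right sector and $\log(F(z)-1)\sim -(-z)^{\sigma}$ in the left sector from the explicit building blocks $g_{m,n}$ together with the Teichm\"uller--Wittich--Belinskii conclusion $\tau(z)\sim z$, differentiate these asymptotics via Cauchy's formula on disks contained in a slightly smaller sector, and then read off $E=F/F'$ and $A$ from \eqref{recuree1pm7}; the paper carries out exactly these steps (deferring the final computations to \cite[Section~3.4]{Bergweilereremenko2019}). The only imprecision is that in $\Omega_{-}$ you state the asymptotic for $\log E$ as if it comes directly from the construction, whereas in fact one first obtains $F-1\sim e^{-(-w)^{\sigma}}$ from the behaviour of $v_{m,n}$ and then passes to $E=F/F'$; this is a presentational shortcut rather than a gap.
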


To prove Theorem~\ref{maintheorem2}, we first obtain a locally univalent function $F$ by (geometrically) gluing infinitely many different $g_{m,n}$s in \eqref{recuree1pm9} in the same way as that in \cite{Bergweilereremenko2019}. The process can be visualized as follows: Consider the strips
\begin{equation*}
\begin{split}
\Pi_k=\{x+iy: 2\pi(k-1)<y<2\pi k\}.
\end{split}
\end{equation*}
Let $\Im_k$ be the bordered Riemann surface spread over the plane, which is the image of this strip under the function
\begin{equation*}
\begin{split}
g_{m_k,n_k}(z)=R_{m_k,n_k}(e^z)e^{e^{z}}.
\end{split}
\end{equation*}
All these Riemann surfaces have two boundary components which project onto the ray $(1,+\infty)\subset \mathbb{R}$. By gluing them together along the real and imaginary axis, in the same order as the strips $\prod_k$ are glued together in the plane, the resulting Riemann surface $\Im$ is open and simply connected. The function $F$ in Theorems~\ref{maintheorem2} will be the conformal mapping from $\mathbb{C}$ to $\Im$. Unlike in \cite{Bergweilereremenko2019} where $g_{m_k,n_k}(z)$ has no poles (i.e., $m_k=0$ for all $k$), in the proof of Theorem~\ref{maintheorem2} we allow $g_{m_k,n_k}(z)$ to have some poles. In order to obtain two functions $f_1$ and $f_2$ with prescribed exponents of convergence of zeros, we shall choose two suitable infinite sequence $(n_k)$ and $(m_k)$. Also, for technical reasons we will actually work with the function $g_{m_k,n_k}(z+s_{m_k,n_k})$ with a carefully chosen constant $s_{m_k,n_k}$.

When $\delta=1$, the choice of the sequence $(m_k)$ implies that the number of zeros of the function $f_1$ in Theorem~\ref{maintheorem2} satisfies $d_2(\log r)^{-2}r^{\rho\gamma}\leq n(r,0,f_1)\leq d_1(\log r)^{-2}r^{\rho\gamma}$ for all large $r$, where $d_1$ and $d_2$ are two positive constants. It seems that the method in the proof of Theorem~\ref{maintheorem2} does not allow us to remove the term $(\log r)^{-2}$.
%in the case $\lambda(f_1)=\lambda(f_2)$.
In~\cite{Zhang2023}, the present author also pastes two different $g_{m,n}$s in \eqref{recuree1pm9} along the real axis as in \cite{Bergweilereremenko2017} and obtain Bank--Laine functions having logarithmic spiral behavior. In particular, the two functions $w_1$ and $w_2$ constructed in \cite[Theorem~1.1]{Zhang2023} both satisfy $d_4 r^{\rho}\leq n(r,0,w_i)\leq d_3 r^{\rho}$, $i=1,2$, as $r\to\infty$, where $d_3$ and $d_4$ are two positive constants.

Theorem~\ref{maintheorem2} completes the construction in \cite[Corollary~1.1]{Bergweilereremenko2019}. In~\cite{Bergweilereremenko2019}, Bergweiler and Eremenko further considered a more general setting on $\rho\in(1/2,\infty)$ and used some functions obtained from $g_{0,n}$s in \eqref{recuree1pm9}, not the functions $g_{0,n}$s themselves, to obtain more special Bank--Laine functions. This makes the process of gluing infinitely many different locally univalent meromorphic functions very complicated. We may also use the function $G(z)$ defined in subsection~\ref{Definition of a quasimeromorphic map} to complete the construction in \cite[Corollary~1.2]{Bergweilereremenko2019}.

In the following, we keep the permanent notation: $(f_1,f_2)$ is a normalized pair of solutions of \eqref{bank-laine0}, the quotient $F=f_2/f_1$ is a solution of $S(F)=2A$, and $E=f_1f_2=F/F'$ is a solution of \eqref{recuree1pm7}.

\section{Proof of Theorem~\ref{maintheorem2}}\label{Proof of maintheorem2} % use lowercase except for proper names

\subsection{Preliminary lemmas}\label{Preliminary lemmas} % use lowercase except for proper names

We present some preliminarily lemmas for the proof of Theorem~\ref{maintheorem2}, the proofs for which are very similar to that in \cite{Bergweilereremenko2019}. Since now there is a nontrivial term $Q_m(z)$ in \eqref{recuree1pm8}, which is taken to be a constant in \cite{Bergweilereremenko2017,Bergweilereremenko2019}, we shall present most of the details of the proof.

%This carefully chosen function $\alpha(x)$ allow us to define a quasimeromorphic function in the plane and also to estimate the dilatation in subsection~\ref{Definition of a quasimeromorphic map}.

\begin{lemma}\label{Lemma1}
Let $\gamma>1$. Then there exists a sequence $(n_k)$ of nonnegative integers, with $n_1=0$, such that the function $h:[0,\infty)\to[0,\infty)$ which satisfies $h(0)=0$ and which is linear on the intervals $[2\pi (k-1), 2\pi k]$ and has slope $2n_k+1$ there satisfies
\begin{equation}\label{recuree1pm15}
\begin{split}
h(x)=x^{\gamma}+O(x^{\gamma-2})+O(1), \quad x\to\infty
\end{split}
\end{equation}
and
\begin{equation}\label{recuree1pm18}
\begin{split}
2n_k+1=\gamma(2\pi k)^{\gamma-1}+O\left(k^{\gamma-2}\right)+O(1), \quad k\to\infty.
\end{split}
\end{equation}
Moreover, letting $\alpha:[0,\infty)\to[0,\infty)$ be a function such that $\alpha(x)=(\log (x+2\pi))^{-2}$ and $\mathfrak{h}:[0,\infty)\to[0,\infty)$ be a function linear and non-decreasing on each interval $[2\pi (k-1), 2\pi k]$ such that $\mathfrak{h}(0)=0$ and $\mathfrak{h}(x)=O(\alpha(x)x^{\gamma})$ as $x\to\infty$, then the function $g(x)$ defined on $[0,\infty)$ by $h(g(x))+\mathfrak{h}(g(x))=x^{\gamma}$ satisfies
\begin{equation}\label{recuree1pm16}
\begin{split}
g(x)=x+O(\alpha(x)x)+O(x^{-1})+O(x^{1-\gamma}), \quad x\to\infty
\end{split}
\end{equation}
and
\begin{equation}\label{recuree1pm17}
\begin{split}
g'(x)=1+O(\alpha(x))+O(x^{-1})+O(x^{1-\gamma}), \quad x\to\infty,
\end{split}
\end{equation}
where $g'$ denotes either the left or right derivative of $g$.
\end{lemma}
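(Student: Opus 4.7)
The plan is to define $(n_k)$ by a greedy rounding scheme that controls the accumulated error between $h(2\pi k)$ and $(2\pi k)^\gamma$, and then to derive both interval-wise estimates by comparing the piecewise linear $h$ to the Taylor expansion of $x^\gamma$. Specifically, set $n_1=0$ and, inductively, choose $2n_k+1$ to be the positive odd integer closest to $[(2\pi k)^\gamma - h(2\pi(k-1))]/(2\pi)$; the finitely many small $k$ where this quantity fails to be positive can be absorbed into the additive $O(1)$ term. Since consecutive odd integers differ by $2$, the accumulated error $\epsilon_k := h(2\pi k) - (2\pi k)^\gamma$ then satisfies $|\epsilon_k| \leq 2\pi$ uniformly. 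A Taylor expansion of $(2\pi k)^\gamma$ around $2\pi(k-1)$ yields $\Delta_k := (2\pi k)^\gamma - (2\pi(k-1))^\gamma = 2\pi\gamma(2\pi k)^{\gamma-1} + O(k^{\gamma-2})$, and combining with $2\pi(2n_k+1) = \Delta_k + \epsilon_k - \epsilon_{k-1}$ immediately gives~\eqref{recuree1pm18}. For~\eqref{recuree1pm15}, fix $x \in [2\pi(k-1),2\pi k]$ and write
\[
h(x) - x^\gamma = \epsilon_{k-1} + \int_{2\pi(k-1)}^{x}\bigl[(2n_k+1) - \gamma y^{\gamma-1}\bigr]\,dy;
\]
on this interval $\gamma y^{\gamma-1}$ differs from $\gamma(2\pi k)^{\gamma-1}$ by $O(k^{\gamma-2})$, so by~\eqref{recuree1pm18} the integrand is $O(k^{\gamma-2}) + O(1)$, and integrating over a length at most $2\pi$ yields~\eqref{recuree1pm15}.

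For the second half, combining~\eqref{recuree1pm15} with $\mathfrak{h}(t) = O(\alpha(t)t^\gamma)$ gives $h(t) + \mathfrak{h}(t) = t^\gamma\bigl[1 + O(\alpha(t)) + O(t^{-2}) + O(t^{-\gamma})\bigr]$, which is eventually strictly increasing, so $g$ is well-defined on some $[x_0,\infty)$. Substituting $t = g(x)$ into the defining equation and extracting the $\gamma$-th root produces
\[
g(x) = x\bigl[1 + O(\alpha(g(x))) + O(g(x)^{-2}) + O(g(x)^{-\gamma})\bigr]^{-1/\gamma};
\]
a preliminary pass gives $g(x) = x(1+o(1))$, and substituting this back and invoking the monotonicity of $\alpha$ yields the refined estimate~\eqref{recuree1pm16}.

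Finally, at any $x$ where $g$ is differentiable (outside the countable set of preimages of the breakpoints $\{2\pi k\}$), differentiating $h(g(x)) + \mathfrak{h}(g(x)) = x^\gamma$ gives $g'(x) = \gamma x^{\gamma-1}/[(2n_k+1)+ 2m_k]$, where $2m_k$ denotes the slope of $\mathfrak{h}$ on the interval containing $g(x)$. By~\eqref{recuree1pm18} together with $(2\pi k)^{\gamma-1} = g(x)^{\gamma-1}(1+O(g(x)^{-1}))$, and using a structural bound $2m_k = O(\alpha(g(x))g(x)^{\gamma-1})$ on the local slope of $\mathfrak{h}$, the denominator equals $\gamma x^{\gamma-1}\bigl[1 + O(\alpha(x)) + O(x^{-1}) + O(x^{1-\gamma})\bigr]$ after replacing $g$ by $x$ via~\eqref{recuree1pm16}; inverting yields~\eqref{recuree1pm17}. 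The main obstacle is this last step: the crude monotonicity bound $2m_k \leq \mathfrak{h}(2\pi k)/(2\pi)$ only gives $2m_k = O(\alpha(k) k^\gamma)$, which is too weak to force the error $O(\alpha(x))$ in~\eqref{recuree1pm17}, so the estimate on the \emph{local} slope of $\mathfrak{h}$ must be extracted from the specific construction of $\mathfrak{h}$ (via the sequence $(m_k)$ that is chosen in tandem with $(n_k)$) rather than from the pointwise hypothesis alone; verifying this and tracking the error propagation through the $g^{\gamma-1} \to x^{\gamma-1}$ conversion is where the careful work sits, and it is the only place where the precise form $\alpha(x) = (\log(x+2\pi))^{-2}$ enters.
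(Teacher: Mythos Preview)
Your proposal is correct and follows essentially the same route as the paper: the paper defers the construction of $(n_k)$ and estimates~\eqref{recuree1pm15}--\eqref{recuree1pm18} to \cite[Lemma~3.1]{Bergweilereremenko2019} (which is the greedy rounding you describe), and for~\eqref{recuree1pm16}--\eqref{recuree1pm17} it writes $g(x)=x(1+\omega(x))$, expands $(h+\mathfrak{h})(g(x))=x^\gamma$ to solve for $\omega$, and then computes $g'(x)=\gamma x^{\gamma-1}/(h+\mathfrak{h})'(g(x))$ exactly as you do. Your flag on the last step is well placed: the paper simply asserts $(h+\mathfrak{h})'(x)=\gamma x^{\gamma-1}+O(x^{\gamma-2})+O(1)+O(\alpha(x)x^{\gamma-1})$ without comment, implicitly using the slope bound $m_k=O(\alpha(2\pi k)k^{\gamma-1})$ that comes from the explicit construction of $\mathfrak{h}$ in Lemma~\ref{Lemma2} rather than from the pointwise hypothesis $\mathfrak{h}(x)=O(\alpha(x)x^\gamma)$ stated in Lemma~\ref{Lemma1}, so you have in fact been more careful here than the paper.
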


\begin{proof}
In the case $\gamma>1$, the proof for the two estimates \eqref{recuree1pm15} and \eqref{recuree1pm18} are the same as that of \cite[Lemma~3.1]{Bergweilereremenko2019}. To prove \eqref{recuree1pm16}, we note that \eqref{recuree1pm15} implies that $g(x)=x(1+\omega(x))$ where $\omega(x)\to 0$ as $x\to\infty$. Using \eqref{recuree1pm15} again we see that
\begin{equation*}
\begin{split}
h(g(x))+\mathfrak{h}(g(x))=x^{\gamma}=x^{\gamma}[1+\gamma\omega(x)+O(\omega(x)^2)]+O(x^{\gamma-2})+O(1)+O(\alpha(x)x^{\gamma})
\end{split}
\end{equation*}
as $x\to\infty$. Note that the two terms $O(x^{\gamma-2})$ and $O(1)$ above vanish in the case $\gamma=1$. This yields
\begin{equation*}
\begin{split}
\omega(x)=O(\alpha(x))+O(x^{-2})+O(x^{-\gamma}),
\end{split}
\end{equation*}
from which \eqref{recuree1pm16} follows. Similarly as in \cite[Lemma~3.1]{Bergweilereremenko2019}, we have
\begin{equation*}
\begin{split}
(h+\mathfrak{h})'(x)=\gamma x^{\gamma-1}+O(x^{\gamma-2})+O(1)+O(\alpha(x)x^{\gamma-1})
\end{split}
\end{equation*}
and thus
\begin{equation*}
\begin{split}
(h+\mathfrak{h})'(g(x))=\gamma x^{\gamma-1}+O(x^{\gamma-2})+O(1)+O(\alpha(x)x^{\gamma-1}).
\end{split}
\end{equation*}
Hence
\begin{equation*}
\begin{split}
g'(x)=\frac{\gamma x^{\gamma-1}}{(h+\mathfrak{h})'(g(x))}=1+O(\alpha(x))+O(x^{-1})+O(x^{1-\gamma}),
\end{split}
\end{equation*}
which is \eqref{recuree1pm17}. This completes the proof.

\end{proof}

In the proof of Theorem~\ref{maintheorem2}, the function $\mathfrak{h}(x)$ in Lemma~\ref{Lemma1} is chosen as in the following

\begin{lemma}\label{Lemma2}
Let $\delta\in[0,1]$ and $\gamma$ be the constant in Lemma~\ref{Lemma1}. Then there exists a non-negative sequence $(m_k)$ of integers, with $m_1=0$, such that the function $\mathfrak{h}:[0,\infty)\to[0,\infty)$ satisfies $\mathfrak{h}(0)=0$ and one of the following properties:

(1) when $0\leq \delta\gamma \leq 1$, $\mathfrak{h}$ is linear on each interval $[2\pi (k-1), 2\pi k]$ and has slopes $m_k$ there and there is a subsequence $(m_{k_i})$ such that $m_{k_i}=1$ for $k\geq k_1$ and $m_{k}=0$ for other $k_i$ such that
\begin{equation}\label{recuree1pm13}
\begin{split}
\mathfrak{h}(x)=
\left\{
  \begin{array}{ll}
    x^{\delta\gamma}+O(1), &  x\in[2\pi (k_i-1), 2\pi k_i] \\
    0,                          &  \text{otherwise}
  \end{array}
\right.   , \quad x\to\infty
\end{split}
\end{equation}
and, in particular, $m_k=0$ for all $k\geq k_1$ when $\delta\gamma=0$ and $m_k=1$ for all $k\geq k_1$ when $\delta\gamma=1$ and $k_{i}\geq c i^{1/\delta\gamma}$ for a fixed positive constant $c$ when $0<\delta\gamma<1$; or

(2) when $\delta\gamma>1$, $\mathfrak{h}$ is linear on the intervals $[2\pi (k-1), 2\pi k]$ and has slope $m_k$ there satisfying
\begin{equation}\label{recuree1pm13fu1}
\begin{split}
\mathfrak{h}(x)=\alpha(x)x^{\delta\gamma}+O(\alpha(x)x^{\delta\gamma-2})+O(1), \quad x\to\infty,
\end{split}
\end{equation}
and, moreover, $(m_k)$ is increasing such that $m_{k+1}-m_k$ is an odd integer when $m_{k+1}>m_k$ and
\begin{equation}\label{recuree1pm13fu2}
\begin{split}
m_k=\alpha(2\pi k)\delta\gamma(2\pi k)^{\delta\gamma-1}+O(\alpha(2\pi k)k^{\delta\gamma-2})+O(1), \quad k\to\infty.
\end{split}
\end{equation}

\end{lemma}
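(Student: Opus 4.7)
The plan is to split the argument by the value of $\delta\gamma$ and build the sequence $(m_k)$ by hand in each regime, then verify the asymptotics for $\mathfrak{h}$ by the same kind of interpolation argument used in Lemma~\ref{Lemma1}.

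First I would dispose of the boundary cases of (1). When $\delta\gamma=0$, taking $m_k=0$ for every $k$ gives $\mathfrak{h}\equiv 0$, and \eqref{recuree1pm13} holds trivially (the constant $x^{\delta\gamma}=1$ being absorbed into the $O(1)$). When $\delta\gamma=1$, I pick any convenient threshold $k_1$, set $m_k=0$ for $k<k_1$ and $m_k=1$ for $k\geq k_1$; then $\mathfrak{h}(x)=\max\{0,x-2\pi(k_1-1)\}=x+O(1)$. For the intermediate range $0<\delta\gamma<1$ the crucial observation is that $(2\pi(k+1))^{\delta\gamma}-(2\pi k)^{\delta\gamma}\to 0$, so for each sufficiently large $i$ I can choose an integer $k_i$ with $|(2\pi k_i)^{\delta\gamma}-2\pi i|\leq\pi$ and with $(k_i)$ strictly increasing in $i$; setting the slopes equal to $1$ precisely at these indices and $0$ elsewhere yields \eqref{recuree1pm13}, and solving $2\pi i\approx(2\pi k_i)^{\delta\gamma}$ produces the bound $k_i\geq c\,i^{1/(\delta\gamma)}$ for a suitable constant $c>0$.

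Case (2), $\delta\gamma>1$, is the substantive one. Set $\phi(x):=\alpha(x)x^{\delta\gamma}$, which is smooth and eventually monotone increasing. I would construct the partial sums $N_k:=\sum_{j=1}^{k}m_j$ greedily: take $N_0=0$ and, given $N_{k-1}$, define $R_k:=\phi(2\pi k)/(2\pi)-N_{k-1}$ and let $m_k$ be whichever element of $\{0,1,3,5,\dots\}$ lies closest to $R_k$. The jumps then automatically lie in $\{0\}\cup\{2j+1:j\geq 0\}$, which is exactly the required parity condition, and an easy induction shows $|N_k-\phi(2\pi k)/(2\pi)|\leq 1$ once $k$ is large enough for $\phi$ to be monotone (the finitely many small $k$ being absorbed into the $O(1)$ error). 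Piecewise-linear interpolation on $[2\pi(k-1),2\pi k]$, combined with the Taylor estimate $\phi(2\pi k)-\phi(2\pi(k-1))=2\pi\phi'(2\pi k)+O(\phi''(2\pi k))$ and the bound $\phi''(x)=O(\alpha(x)x^{\delta\gamma-2})$, then delivers both \eqref{recuree1pm13fu1} and \eqref{recuree1pm13fu2}.

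The delicate point I anticipate is reconciling the greedy rounding with the sharp form of the $O$-terms in \eqref{recuree1pm13fu1}--\eqref{recuree1pm13fu2}: the cumulative error $|N_k-\phi(2\pi k)/(2\pi)|$ has to stay uniformly bounded despite the restricted menu of jump sizes, and the per-step Taylor remainder must be controlled by $\alpha(x)x^{\delta\gamma-2}$ uniformly in $k$. Once $\phi$ has grown enough, however, the rule ``$R_k\in[2j,2j+2)\Rightarrow m_k=2j+1$'' keeps each error bounded by $1$, and no further device should be required; this is essentially the same bookkeeping as in \cite[Lemma~3.1]{Bergweilereremenko2019}, which is why the proof should parallel that of Lemma~\ref{Lemma1}.
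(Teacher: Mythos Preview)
Your treatment of case~(1) is essentially the paper's: pick the indices $k_i$ so that $(2\pi k_i)^{\delta\gamma}$ crosses the successive multiples of $2\pi$, set the slope to $1$ there and $0$ elsewhere, and read off the special cases $\delta\gamma=0,1$ and the spacing $k_i\gtrsim i^{1/(\delta\gamma)}$.

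In case~(2) there is a genuine gap. The parity condition in the statement is on the \emph{differences} $m_{k+1}-m_k$, not on the $m_k$ themselves: one needs $m_{k+1}-m_k$ to be odd whenever $m_{k+1}>m_k$. Your greedy rule selects each $m_k$ from $\{0,1,3,5,\dots\}$, so any two consecutive positive values are both odd and their difference is \emph{even}. This is not a cosmetic issue---the oddness of $\mathfrak{m}-m=m_{k+1}-m_k$ is used later (Lemma~\ref{Lemma4}) to rule out $2(\mathfrak{m}-m)=M-N$. Your construction also does not secure that $(m_k)$ is nondecreasing; the rounding in the greedy step can produce local drops even after $\phi$ becomes monotone.

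The paper's device is different: it controls the parity of the \emph{partial sums} $p_k=\mathfrak{h}(2\pi k)/(2\pi)$ rather than of the slopes. At each step it rounds $\phi(2\pi k)/(2\pi)$ to an integer $p_k$, alternating the prescribed parity of $p_k$ relative to $p_{k-1}$ from step to step. This makes the sequence $m_k=p_k-p_{k-1}$ alternate between odd and even, so consecutive $m_k$'s always have opposite parity and hence $m_{k+1}-m_k$ is odd. The growth condition $\phi(2\pi k)-\phi(2\pi(k-1))\geq 4\pi$ (valid for large $k$ since $\delta\gamma>1$) guarantees both $|p_k-\phi(2\pi k)/(2\pi)|\leq 1$ and $p_k>p_{k-1}$, hence positivity and monotonicity of the slopes. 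Once you replace your rounding rule by this alternating-parity rule for the $p_k$, the rest of your outline---the Taylor estimate for $\phi(2\pi k)-\phi(2\pi(k-1))$ and the interpolation bound $O(\alpha(x)x^{\delta\gamma-2})$---goes through as you describe.
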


\begin{proof}
When $0\leq \delta\gamma \leq 1$, we set $\mathfrak{h}(0)=0$ and choose $k_0=k_1=0$. We note that
\begin{equation*}
\begin{split}
(2\pi k)^{\delta\gamma}-(2\pi(k-1))^{\delta\gamma} \leq 2\pi \quad \text{for} \quad k\geq 1.
\end{split}
\end{equation*}
For $k\leq k_{1}$ we set $m_k=0$. Suppose that $k\geq k_2$, and $\mathfrak{h}(2\pi(k_{i-1}))$ is already defined. Then we define
\begin{equation*}
\begin{split}
\mathfrak{h}(2\pi k_{i}):=2\pi i,
\end{split}
\end{equation*}
where $k_{i}$ is the largest positive integer such that
\begin{equation*}
\begin{split}
(2\pi(k_{i}-1))^{\delta\gamma}<2\pi i\leq (2\pi k_{i})^{\delta\gamma}.
\end{split}
\end{equation*}
Then we interpolate $\mathfrak{h}$ linearly between $2\pi k_{i}-1$ and $2\pi k_i$ in the way that the slopes are equal to $m_{k_i}=1$ on the interval $[2\pi (k_i-1),2\pi k_i]$ and the slopes are equal to $m_{k}=0$ otherwise. In particular, we see that $m_k=0$ for all $k\geq k_1$ when $\delta=0$ and $m_k=1$ for all $k\geq k_1$ when $\delta\gamma=1$; when $0<\delta\gamma<1$
we see that the subsequence $(m_{k_i})$ satisfies $k_{i}\geq c i^{1/\delta\gamma}$ for a fixed positive constant $c$. Thus our first assertion on $\mathfrak{h}$ in \eqref{recuree1pm13} follows.

When $\delta\gamma>1$, the proof is a slightly modification of that in \cite[Lemma~3.1]{Bergweilereremenko2019}.
%To ensure that $m_{k+1}-m_k$ is an odd integer when $m_{k+1}>m_k$,
We shall define $m_k$ in the following way: We set $\mathfrak{h}(0)=0$ and choose $k_0\in \mathbb{N}$ so that
\begin{equation}\label{recuree ne1}
\begin{split}
\alpha(2\pi k)(2\pi k)^{\delta\gamma}-\alpha(2\pi (k-1))(2\pi(k-1))^{\delta\gamma} \geq 4\pi \quad \text{for} \quad k\geq k_0.
\end{split}
\end{equation}
Such a $k_0$ exists when $\delta\gamma>1$. For $k\leq k_0$ we set $m_k=0$. Suppose that $k>k_0$, and $\mathfrak{h}(2\pi (k-1))$ is already defined. Then we define
\begin{equation*}
\begin{split}
\mathfrak{h}(2\pi k):=2\pi p_{k},
\end{split}
\end{equation*}
where $p_{k}$ is a positive integer of opposite parity to $\mathfrak{h}(2\pi(k-1))/2\pi$ minimizing
\begin{equation*}
\begin{split}
\left|\alpha(2\pi k)(2\pi k)^{\delta\gamma}-2\pi p_{k}\right|.
\end{split}
\end{equation*}
There are at most two such $p_k$, and when there are two, we choose the larger one. Then we interpolate $\mathfrak{h}$ linearly between $2\pi (k-1)$ and $2\pi k$. Evidently, with this definition,
\begin{equation}\label{recuree ne2}
\begin{split}
\left|\mathfrak{h}(2\pi k)-\alpha(2\pi k)(2\pi k)^{\delta\gamma}\right|\leq 2\pi.
\end{split}
\end{equation}
Using \eqref{recuree ne1} and \eqref{recuree ne2} we have
\begin{equation}\label{recuree ne2 f0}
\begin{split}
\mathfrak{h}(2\pi k)\geq \alpha(2\pi k)(2\pi k)^{\delta\gamma}-2\pi>\alpha(2\pi (k-1))(2\pi (k-1))^{\delta\gamma}+2\pi\geq \mathfrak{h}(2\pi (k-1)).
\end{split}
\end{equation}
Further, after $\mathfrak{h}(2\pi k)$ is already defined, then we define
\begin{equation*}
\begin{split}
\mathfrak{h}(2\pi (k+1)):=2\pi p_{k+1},
\end{split}
\end{equation*}
where $p_{k+1}$ is a positive integer of the \emph{same} parity to $h(2\pi k)/2\pi$ minimizing
\begin{equation*}
\begin{split}
\left|\alpha(2\pi (k+1))(2\pi (k+1))^{\delta\gamma}-2\pi p_{k+1}\right|.
\end{split}
\end{equation*}
There are at most two such $p_k$, and when there are two, we choose the larger one. Then we interpolate $\mathfrak{h}$ linearly between $2\pi k$ and $2\pi (k+1)$. Evidently, with this definition,
\begin{equation}\label{recuree ne3}
\begin{split}
\left|\mathfrak{h}(2\pi (k+1))-\alpha(2\pi (k+1))(2\pi (k+1))^{\delta\gamma}\right|\leq 2\pi.
\end{split}
\end{equation}
Using \eqref{recuree ne1} and \eqref{recuree ne3} we have
\begin{equation}\label{recuree ne2 f1}
\begin{split}
\mathfrak{h}(2\pi (k+1))\geq \alpha(2\pi (k+1))(2\pi (k+1))^{\delta\gamma}-2\pi>\alpha(2\pi k)(2\pi k)^{\delta\gamma}+2\pi\geq \mathfrak{h}(2\pi k).
\end{split}
\end{equation}
By \eqref{recuree ne2 f0} and \eqref{recuree ne2 f1} we see that $h$ is strictly increasing, and its slopes are positive integers, say $m_k$. With this definition, $m_{k+1}-m_k$ is an odd integer whenever $m_{k+1}>m_k$.

To prove that the function $h$ satisfies \eqref{recuree1pm15}, we note first that $h(2\pi k)=\alpha(2\pi k)(2\pi k)^{\delta\gamma}+O(1)$ by the construction. For $0\leq t\leq 1$ we use the relation $\log(1+x)\sim x$ as $x\to 0$ and have
\begin{equation*}
\begin{split}
\frac{\alpha(2\pi (k+t))(2\pi (k+t))^{\delta\gamma}}{\alpha(2\pi k)(2\pi k)^{\delta\gamma}}=\left(1-\frac{2t}{k\log (2\pi(k+1))}\right)\left(1+\delta\gamma \frac{t}{k}\right)+O\left(\frac{1}{k^2}\right)
\end{split}
\end{equation*}
as $k\to\infty$. Then by substituting the above relation into the following equation
\begin{equation*}
\begin{split}
\alpha(2\pi (k+t))(2\pi (k+t))^{\delta\gamma}-t\alpha(2\pi (k+1))(2\pi (k+1))^{\delta\gamma}-(1-t)\alpha(2\pi k)(2\pi k)^{\delta\gamma}
\end{split}
\end{equation*}
we conclude that the straight line connecting the points $(2\pi k, \alpha(2\pi k)(2\pi k)^{\delta\gamma})$ and $(2\pi (k+1), \alpha(2\pi (k+1))(2\pi (k+1))^{\delta\gamma})$ deviates from the graph of the function $x\to \alpha(x)x^{\delta\gamma}$ between the points $k$ and $k+1$ by a term which is $O(\alpha(x)x^{\delta\gamma-2})$. This yields \eqref{recuree1pm13fu1}. Finally, by construction we have
\begin{equation}\label{recuree ne13}
\begin{split}
m_k=\frac{\alpha(2\pi k)(2\pi k)^{\delta\gamma}-\alpha(2\pi (k-1))(2\pi (k-1))^{\delta\gamma}}{2\pi}+O(1),
\end{split}
\end{equation}
from which \eqref{recuree1pm18} follows.

\end{proof}

To prove Theorem~\ref{maintheorem2}, we also need to define the functions $h$ and $g$ in Lemma~\ref{Lemma1} in the case $\gamma=1$. In this case, we will choose $n_1=0$ and $n_k=1$ for all $k\geq 2$. Then the function $h:[0,\infty)\to[0,\infty)$ will be defined in the way that $h(x)=x$ when $k=1$ and $h(x)=3x-4\pi$ when $k\geq 2$. It follows that the function $g:[0,\infty)\to[0,\infty)$ defined by $h(g(x))=x$ satisfies $x(1+o(1))/3\leq g(x)\leq x(1+o(1))$ as $x\to\infty$.

The choice of the function $\alpha(x)$ in Lemma~\ref{Lemma2} guarantees that the function $G(z)$ constructed in subsection~\ref{Definition of a quasimeromorphic map} satisfies the hypothesis of the Teichm\"uller--Wittich--Belinskii theorem \cite[\S~V.6]{lehtoVirtanen2008} in the case $\delta=1$. If we only consider the case $\delta<1$, then $\alpha(x)$ can be replaced by the constant $1$.

Let $(n_k)$ and $(m_k)$ be the two sequences constructed in Lemma~\ref{Lemma1} and Lemma~\ref{Lemma2}, respectively. When $\gamma>1$ we have
\begin{equation}\label{recuree1pm23bea1}
\begin{split}
\frac{m_k}{2n_k+1}=O\left(\frac{1}{(\log (k+2))^{2}}\right), \quad k\to\infty
\end{split}
\end{equation}
in both of the two cases $\delta\gamma\leq 1$ and $\delta\gamma>1$. Below we shall prove Lemma~\ref{Lemma3}--Lemma~\ref{Lemma5} in the case $\gamma>1$. For simplicity, in the proofs we shall use the notation $\beta(k)$ such that $\beta(k)=O((\log (k+2))^{-2})$ where $k\to\infty$.

We denote $N_k=m_k+2n_k+1$ and $N_{k+1}=\mathfrak{m}_k+2\mathfrak{n}_k+1$. In particular, by Lemma~\ref{Lemma2} we have $N_{k+1}>N_k$ when $\delta\gamma\leq 1$ and $\gamma>1$. By \eqref{recuree1pm18} we have
\begin{equation}\label{recuree1pm71}
\begin{split}
2n_k+1=\gamma(2\pi k)^{\gamma-1}(1+O(k^{-1}))
\end{split}
\end{equation}
and hence, since $\delta\gamma\leq \gamma$, by \eqref{recuree1pm13fu2} and \eqref{recuree1pm23bea1} that
\begin{equation}\label{recuree1pm72}
\begin{split}
N_k=\gamma(2\pi k)^{\gamma-1}(1+\beta(k)).
\end{split}
\end{equation}
It follows that
\begin{equation}\label{recuree1pm73}
\begin{split}
\frac{N_{k+1}}{N_k}=\left(\frac{k+1}{k}\right)^{\gamma-1}(1+\beta(k))=1+\beta(k).
\end{split}
\end{equation}
Moreover, denoting $\mathcal{N}_k=\sum_{j=1}^{k}N_j$, we have from \eqref{recuree1pm72} that
\begin{equation}\label{recuree1pm109}
\begin{split}
\mathcal{N}_{k}\sim (2\pi)^{\gamma-1}k^{\gamma}, \quad k\to\infty.
\end{split}
\end{equation}
The above estimates \eqref{recuree1pm71}--\eqref{recuree1pm109} will be used in the following proof.

Let now $m,n,\mathfrak{m},\mathfrak{n}\in \mathbb{N}$. For each pair $(m,2n)$, we consider the function $\phi:\mathbb{R}\to\mathbb{R}$ defined by
\begin{equation}\label{diffeo}
\begin{split}
g_{\mathfrak{m},\mathfrak{n}}(x)=g_{m,n}(\phi(x)).
\end{split}
\end{equation}
Then $\phi$ is an increasing diffeomorphism on $\mathbb{R}$; see \cite{Bergweilereremenko2017} or \cite{Zhang2023}. We will consider the functions $\phi$ for the case that $m=m_k$, $n=n_k$, $\mathfrak{m}=m_{k+1}$ and $\mathfrak{n}=n_{k+1}$. Thus we will consider the behavior of $\phi$ as $k\to\infty$, but in order to simplify the formulas we suppress the dependence of $\phi$ from $m,n$ and $\mathfrak{m},\mathfrak{n}$ from the notation. We first prove the following

\begin{lemma}\label{Lemma3}
Let $m,n\in \mathbb{N}$ and put $N=m+2n+1$. Let $y>0$. Then
\begin{equation}\label{recuree1pm24}
\begin{split}
\log(h_{m,n}(y)-1)=&\ -\log \binom{m+2n}{m} N!+y+N\log y\\
&\ -2\log Q_m(y)-\log\left(1+\frac{y}{N}\right)+R(y,N),
\end{split}
\end{equation}
where $R(y,N)\leq 4Bmy/[N(y+N-2m)]$ for all $y$ and all $N\geq N_0$ for some integer $N_0$ and some constant $B$.
\end{lemma}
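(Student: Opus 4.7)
The plan is to represent $h_{m,n}(y) - 1$ as an integral and extract the main asymptotic by a single integration by parts with a carefully chosen antiderivative. Since $B_0/A_0 = 1$ forces $h_{m,n}(0) = 1$, combining \eqref{recuree1pm10} with the substitution $t = e^s$ (and $g_{m,n}(z) = h_{m,n}(e^z)$) gives
\begin{equation*}
h_{m,n}(y) - 1 = \frac{1}{\binom{m+2n}{m}(N-1)!} \int_0^y \frac{e^t t^{N-1}}{Q_m(t)^2}\, dt.
\end{equation*}
The integrand is positive, monotone increasing on $[0,y]$, and strongly peaked near the upper endpoint, because its logarithmic derivative $1 + (N-1)/t - 2Q'_m(t)/Q_m(t)$ is at least $1 + (N-1-2m)/t$, which is positive once $N \geq 2m+1$.

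The key step is to introduce the candidate antiderivative
\begin{equation*}
F(t) := \frac{e^t\, t^N}{(t+N)\, Q_m(t)^2},
\end{equation*}
which vanishes at $t=0$ and satisfies, by direct calculation,
\begin{equation*}
F'(t) = \frac{e^t t^{N-1}}{Q_m(t)^2}\left[1 - \frac{t}{(t+N)^2} - \frac{2t Q'_m(t)}{(t+N)\, Q_m(t)}\right].
\end{equation*}
Integrating from $0$ to $y$ gives
\begin{equation*}
\int_0^y \frac{e^t t^{N-1}}{Q_m(t)^2}\, dt = F(y) + C(y), \quad C(y) := \int_0^y \frac{e^t t^{N-1}}{Q_m(t)^2}\left[\frac{t}{(t+N)^2} + \frac{2tQ'_m(t)}{(t+N)\,Q_m(t)}\right] dt.
\end{equation*}
Taking logarithms, using $\log F(y) = y + N\log y - 2\log Q_m(y) - \log(y+N)$, and combining with the identity $-\log(y+N) - \log((N-1)!) = -\log(1+y/N) - \log N!$ produces the stated form of \eqref{recuree1pm24}, with $R(y,N) = \log\bigl(1 + C(y)/F(y)\bigr)$ automatically non-negative.

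The remaining work is the upper bound for $C(y)/F(y)$. The essential structural identity is
\begin{equation*}
\frac{tQ'_m(t)}{Q_m(t)} = \sum_{i=0}^m i\cdot\frac{A_i t^i}{Q_m(t)} \leq m \qquad(t>0),
\end{equation*}
since the right-hand side is a convex combination of $\{0,1,\ldots,m\}$ with positive weights summing to one; thus the bracket in $C(y)$ is dominated by $(1+2m)/(t+N)$. A crude $L^\infty$ estimate of this factor over $[0,y]$ yields only $C(y)/F(y) = O((1+2m)/N)$, uniform in $y$. To recover the sharper bound $4Bmy/[N(y+N-2m)]$ I would substitute $t = y - s$ and use that the ratio of the integrand to its value at $t=y$ decays at rate at least $1 + (N-1-2m)/y$ in $s$; this strong concentration lets one replace the bracket by its effective value near $t=y$ and extract the extra factor $y/(y+N-2m)$ in the error. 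The main obstacle is precisely this refinement, since the sup-norm bound on the correction fails to produce the correct $y$-dependence, and the concentration argument must be carried out carefully for both terms of the bracket separately.
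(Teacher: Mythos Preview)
Your setup is cleaner than the paper's: by choosing the trial antiderivative $F(t)=e^t t^N/[(t+N)Q_m(t)^2]$ you build the factor $-\log(1+y/N)$ directly into the main term, whereas the paper first integrates by parts with $d(u^N/N)$ to obtain
\[
h_{m,n}(y)-1=\frac{1}{\binom{m+2n}{m}N!}\,\frac{e^y y^{N}}{Q_m(y)^2}\bigl(1-U(y)\bigr),
\]
and only afterwards extracts $-\log(1+y/N)$ from $\log(1-U(y))$. Your bound $tQ'_m(t)/Q_m(t)\le m$ is the convex--combination analogue of the paper's estimate $Q'_m/Q_m\le m/(1+2n)$; both serve the same purpose.

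However, the proposal is incomplete exactly where the real work lies. You obtain the crude uniform bound $C(y)/F(y)=O((1+2m)/N)$ and then only \emph{sketch} a concentration argument for the refined bound $4Bmy/[N(y+N-2m)]$, calling this ``the main obstacle''. That obstacle is the heart of the lemma. Concretely, the sup--norm estimate on the bracket gives at best $2m/N$ from the $Q'_m/Q_m$--term, and this does \emph{not} improve to $O(my/N^2)$ for small $y$ without a quantitative concentration statement; replacing the bracket by its value at $t=y$ is not justified until you control how fast the mass of $e^t t^{N-1}/Q_m(t)^2$ falls off below $y$.

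The paper handles this by writing $U(y)=G_1-2G_2$ and, for $G_1$, performing the substitution $u=ys$ to obtain $G_1=yI$ with
\[
I=\int_0^1 e^{y(s-1)}s^N\,\frac{Q_m(y)^2}{Q_m(ys)^2}\,ds.
\]
The two key inequalities are $e^{y(s-1)}\ge s^y$ (from $\log s\le s-1$) and $Q_m(y)^2/Q_m(ys)^2\le s^{-2m}$; these yield the tight two--sided estimate
\[
\frac{1}{y+N+1}\le I\le \frac{1}{y+N-2m+1}+\frac{12}{(y+N-2m)^2},
\]
from which the $\log(1+y/N)$ term and the refined remainder follow simultaneously. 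Your substitution $t=y-s$ is an additive reparametrisation of the same idea, but to make it work you would still need an inequality playing the role of $e^{y(s-1)}\ge s^y$ to convert the exponential decay into a power law and get closed--form bounds on the resulting integrals. Until that step (and the analogous treatment of the $Q'_m/Q_m$--term, which the paper bounds by $G_2\le \frac{m}{1+2n}G_1$) is carried out, the argument does not establish the stated bound on $R(y,N)$.
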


We see that the error term $R(y,N)$ in \eqref{recuree1pm24} is uniformly bounded for all $y>0$ and all $N\geq N_0$. We also see that, in the case $\gamma>1$, $R(y,N)$ is of type $\beta(k)$ when $N\to\infty$, instead of type $O(1/N)$ as in \cite[Lemma~3.2]{Bergweilereremenko2019}, which is enough for our purpose. In terms of $g_{m,n}$ equation \eqref{recuree1pm24} takes the form
\begin{equation}\label{recuree1pm25}
\begin{split}
\log(g_{m,n}(x)-1)=&\ -\log \binom{m+2n}{m} N!+e^x+Nx\\
&\ -2\log Q_m(e^x)-\log\left(1+\frac{e^x}{N}\right)+R(e^x,N).
\end{split}
\end{equation}

\begin{proof}[Proof of Lemma~\ref{Lemma3}]
It is easy to see from \eqref{recuree1pm9} and \eqref{recuree1pm10} that
\begin{equation*}
\begin{split}
h'_{m,n}(y)=\frac{1}{\binom{m+2n}{m}(m+2n)!}\frac{e^yy^{m+2n}}{Q_m(y)^2}.
\end{split}
\end{equation*}
Since $h_{m,n}(y)$ is analytic on $[0,+\infty)$, we have by \eqref{recuree1pm10} and the formula for the error term of a Taylor series that
\begin{equation}\label{recuree1pm27}
\begin{split}
h_{m,n}(y)-1=&\ \frac{1}{\binom{m+2n}{m}(m+2n)!}\int_0^{y}\frac{e^{u}u^{m+2n}}{Q_m(u)^2}du\\
=&\ \frac{1}{\binom{m+2n}{m}(m+2n)!}\int_0^{y}\frac{e^{u}}{Q_m(u)^2}d\left(\frac{u^{m+2n+1}}{m+2n+1}\right)\\
=&\ \frac{1}{\binom{m+2n}{m}(m+2n+1)!}\frac{e^yy^{m+2n+1}}{Q_m(y)^2}\left(1-U(y)\right),
\end{split}
\end{equation}
where
\begin{equation}\label{recuree1pm27 fur1}
\begin{split}
U(y)=\frac{Q_m(y)^2}{e^yy^{m+2n+1}}\int_0^{y}\frac{e^{u}u^{m+2n+1}[Q_m(u)-2Q'_m(u)]}{Q_m(u)^3}du.
\end{split}
\end{equation}
Note that $N=m+2n+1$. We write
\begin{equation}\label{recuree1pm27 fur2}
\begin{split}
U(y)=\frac{Q_m(y)^2}{e^yy^{N}}\int_0^{y}\frac{e^{u}u^{N}}{Q_m(u)^2}du-2\frac{Q_m(y)^2}{e^yy^{N}}\int_0^{y}\frac{e^{u}u^{N}Q'_m(u)}{Q_m(u)^3}du=G_1-2G_2.
\end{split}
\end{equation}
For $G_1$ in \eqref{recuree1pm27 fur2}, we write
\begin{equation}\label{recuree1pm27 fur2 rqr1}
\begin{split}
G_1=y\int_0^{1}e^{y(s-1)}s^{N}\frac{Q_m(y)^2}{Q_m(sy)^2}du=yI.
\end{split}
\end{equation}
Since $Q_m(y)^2/Q_m(sy)^2\geq 1$ for all $s\in[0,1]$, we may first use the inequality $\log s\leq s-1$ for $s>0$ to obtain
\begin{equation}\label{recurfai   1}
\begin{split}
I\geq \int_0^{1}y^{\log s}s^{N}du=\int_0^{1}s^{y+N}du=\frac{1}{y+N+1}.
\end{split}
\end{equation}
On the other hand, since $0\leq s\leq 1$, we have $Q_m(y)^2/Q_m(ys)^2\leq s^{-2m}$ for all $s\in[0,1]$. With $\mu=\min\{1/2,1/\sqrt{y}\}$, we write
\begin{equation}\label{recurfai   2}
\begin{split}
I-\frac{1}{y+N-2m+1}=&\ \int_0^{1}\left(e^{y(s-1)}s^{N}\frac{Q_m(y)^2}{Q_m(ys)^2}-e^{y\log s}s^{N-2m}\right)ds\\
\leq&\ \int_0^{1}\left(e^{y(s-1)}s^{N}s^{-2m}-e^{y\log s}s^{N-2m}\right)ds= I_1+I_2,
\end{split}
\end{equation}
where
\begin{equation}\label{recurfai   3}
\begin{split}
I_1=&\ \int_0^{\mu}\left(e^{y(s-1)}s^{N}s^{-2m}-e^{y\log s}s^{N-2m}\right)ds,\\
I_2=&\ \int_{\mu}^{1}\left(e^{y(s-1)}s^{N}s^{-2m}-e^{y\log s}s^{N-2m}\right)ds.
\end{split}
\end{equation}
Then by the same arguments as that in the proof of \cite[Lemma~3.2]{Bergweilereremenko2019}, we have
\begin{equation}\label{recurfai   4}
\begin{split}
I_1\leq \int_0^{\mu}e^{y(s-1)}s^{N}s^{-2m}\leq \frac{8}{(y+N-2m)^2}
\end{split}
\end{equation}
and
\begin{equation}\label{recurfai   5}
\begin{split}
I_2\leq \int_0^{\mu}e^{y(s-1)}s^{N}s^{-2m}\leq \frac{4}{(y+N-2m)^2}.
\end{split}
\end{equation}
From \eqref{recurfai   1}--\eqref{recurfai   5} we get
\begin{equation}\label{recurfai   5 fur1}
\begin{split}
\frac{1}{y+N+1}\leq I \leq \frac{1}{y+N-2m+1}+\frac{12}{(y+N-2m)^2}.
\end{split}
\end{equation}
Since
\begin{equation*}
\begin{split}
\frac{1}{y+N+1}-\frac{1}{y+N}=-\frac{1}{(y+N+1)(y+N)}
\end{split}
\end{equation*}
and
\begin{equation*}
\begin{split}
\frac{1}{y+N-2m+1}-\frac{1}{y+N}=\frac{2m-1}{(y+N-2m+1)(y+N)},
\end{split}
\end{equation*}
we have
\begin{equation}\label{recurfai   5 fur4}
\begin{split}
\left|I-\frac{1}{y+N}\right|\leq \frac{2m+11}{(y+N-2m)^2}.
\end{split}
\end{equation}
Now, for $G_2$ in \eqref{recuree1pm27 fur2}, we recall that $Q_m(y)=\sum_{i=0}^{m}A_ie^{iy}$ and $A_i=\frac{(2n)!}{(i+2n)!}$. Then, since $i\leq i+2n-1$ for all $i$, we have, for all $y>0$,
\begin{equation*}
\begin{split}
\frac{Q'_m(y)}{Q_m(y)}=\frac{\sum_{i=1}^{m}iA_ie^{iy}}{\sum_{i=0}^{m}A_ie^{iy}}\leq \frac{m}{1+2n}.
\end{split}
\end{equation*}
Thus we have
\begin{equation}\label{recurfai   8}
\begin{split}
G_2=\frac{Q_m(y)^2}{e^yy^{N}}\int_0^{y}\frac{e^{u}u^{N}Q'_m(u)}{Q_m(u)^3}du\leq \frac{m}{1+2n}G_1,
\end{split}
\end{equation}
By combining \eqref{recuree1pm27 fur1}, \eqref{recurfai   5 fur4} and \eqref{recurfai   8}, we have
\begin{equation*}
\begin{split}
\log(1-U(y))=&\ \log \left(1-G_1+2G_2\right)\\
=&\ \log \left(\frac{N}{y+N}-r(y,N)+2G_2\right)\\
=&\ -\log \left(1+\frac{y}{N}\right)+\log \left(1-\frac{y+N}{N}\left(r(y,N)-2G_2\right)\right),
\end{split}
\end{equation*}
where $r(y,N)$ satisfies
\begin{equation}\label{recurfai   10}
\begin{split}
|r(y,N)|\leq \frac{(2m+11)y}{(y+N-2m)^2}.
\end{split}
\end{equation}
Note that $2n=N-m-1$. Together with \eqref{recuree1pm27 fur2 rqr1}, \eqref{recurfai   5 fur1}, \eqref{recurfai   8} and \eqref{recurfai   10}, we have
\begin{equation*}
\begin{split}
&\ \frac{y+N}{N}(|r(y,N)|+2G_2)\\
\leq&\ \frac{y+N}{N}\left[\left(2m+11+\frac{24m}{1+2n}\right)\frac{y}{(y+N-2m)^2}+\frac{2m}{1+2n}\frac{y}{y+N-2m}\right]\\
\leq&\ \frac{y+N}{N}\left(\frac{(2m+35)y}{(y+N-2m)^2}+\frac{2m}{1+2n}\frac{y}{y+N-2m}\right).
\end{split}
\end{equation*}
By the constructions of $(n_k)$ and $(m_k)$ in Lemma~\ref{Lemma1} and Lemma~\ref{Lemma2}, we see that there is an integer $N_0$ and some constant $B$ such that for all $N\geq N_0$
\begin{equation}\label{recurfai   12}
\begin{split}
\frac{y+N}{N}(r(y,N)+2G_2) \leq \frac{4Bmy}{N(y+N-2m)}\leq \frac{1}{2}.
\end{split}
\end{equation}
Therefore, using the inequality that $|\log (1+t)|\leq 2|t|$ for $|t|\leq 1/2$, it follows that
\begin{equation}\label{recurfai   13}
\begin{split}
\log(1-U(y))\leq \frac{4Bmy}{N(y+N-2m)}
\end{split}
\end{equation}
for all $N\geq N_0$. Then, by taking the logarithm on both sides of equation \eqref{recuree1pm27} together with \eqref{recurfai   13}, we have the estimate in \eqref{recuree1pm24}. Thus the lemma is proved.

\end{proof}

Recall that by \eqref{recuree1pm10} the function $g_{m,n}:\mathbb{R}\to(1,\infty)$ is an increasing homeomorphism. Thus there exists $s_{m,n}\in \mathbb{R}$ such that
\begin{equation}\label{trans 0}
\begin{split}
g_{m,n}(s_{m,n})=2.
\end{split}
\end{equation}
For each pair of $(m,2n)$, as in \cite{Bergweilereremenko2019} we shall work with the function $g_{m,n}(z+s_{m,n})$ instead of the function $g_{m,n}(z)$ directly. We prove the following

\begin{lemma}\label{Lemma3 trans}
The constant $s_{m,n}$ in \eqref{trans 0} satisfies $s_{m,n}=\log N+O(1)$ as $k\to\infty$.
\end{lemma}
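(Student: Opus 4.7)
The plan is to apply Lemma~\ref{Lemma3} at $y=e^{s_{m,n}}$, where the defining condition $g_{m,n}(s_{m,n})=h_{m,n}(e^{s_{m,n}})=2$ gives $\log(h_{m,n}(y)-1)=0$. Identity \eqref{recuree1pm24} then reads
\begin{equation*}
y+N\log y = \log\binom{m+2n}{m}+\log N! + 2\log Q_m(y)+\log\!\left(1+\tfrac{y}{N}\right)-R(y,N),
\end{equation*}
an implicit equation for $y$. Once I have shown that $y=\Theta(N)$ as $k\to\infty$, the conclusion $s_{m,n}=\log y=\log N+O(1)$ follows immediately.

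The first step is to invoke Stirling's formula $\log N!=N\log N-N+O(\log N)$, which isolates the two dominant terms on the right. Every other term should be shown to be $o(N)$ as $k\to\infty$. The binomial coefficient contributes $\log\binom{N-1}{m}=O(m\log N)$, which is $o(N)$ thanks to $m/N=O((\log k)^{-2})$ from \eqref{recuree1pm23bea1}; the remainder satisfies $|R(y,N)|=O(m/N)=o(1)$ since $y/(y+N-2m)\leq 1$ in the bound provided by Lemma~\ref{Lemma3}; and $\log(1+y/N)$ is harmless once $y=O(N)$ is known. The delicate term is $\log Q_m(y)$, which I would control by exploiting the fact that the ratio of consecutive terms in $Q_m(y)=\sum_{i=0}^m A_iy^i$ is $y/(i+2n)$, bounded by $1$ so long as $y\leq 2n+1\approx N$. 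Splitting into the regimes $y\leq 2n$ and $2n<y\leq CN$ and using the positivity of the $A_i$, one obtains a single bound $\log Q_m(y)=O(m)+O(\log N)=o(N)$ valid for all $y=O(N)$.

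The final step is to rule out $y=o(N)$ and $y/N\to\infty$. Substituting the ansatz $y=\tau N$ and collecting the preceding estimates reduces the identity to
\begin{equation*}
(\tau+\log\tau+1)N=o(N),
\end{equation*}
so $\tau$ must tend to the unique positive root of $\tau+\log\tau+1=0$; equivalently, one checks directly that $y\leq N/e^2$ and $y\geq 3N$ each violate the identity by a positive multiple of $N$ which the $o(N)$ slack cannot absorb. Either formulation yields $y\asymp N$ and therefore $s_{m,n}=\log N+O(1)$.

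The main obstacle is the bookkeeping around $\log Q_m(y)$, whose behaviour changes qualitatively as $y$ crosses the threshold $2n\approx N$, and whose size must be tracked uniformly using the decay rate $m_k/N_k=O((\log k)^{-2})$ supplied by \eqref{recuree1pm23bea1}. The positivity of the coefficients $A_i$ and the uniform smallness of $m_k/N_k$ are the key ingredients that keep the argument closed.
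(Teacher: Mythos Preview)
Your approach coincides with the paper's: both substitute $g_{m,n}(s_{m,n})=2$ into \eqref{recuree1pm25}, apply Stirling, and show that $r:=s_{m,n}-\log N$ (your $\log\tau$) must stay bounded by controlling the residual terms; indeed the paper's key quantity $e^r+r+1$ is exactly your $\tau+\log\tau+1$.

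There is one loose thread. Your bound $\log Q_m(y)=O(m)+O(\log N)=o(N)$ is derived only for $y=O(N)$, yet in the final step you invoke it to rule out $y/N\to\infty$; as written, the reduction to $(\tau+\log\tau+1)N=o(N)$ is circular in that regime, and checking the single value $y=3N$ does not by itself exclude all larger $y$. The paper closes this by treating large $r>0$ separately: since $N^mA_m\asymp 1$, one has $\log Q_m(Ne^r)=mr+O(1)$ for large $r$, so the identity becomes $N(e^r+r+1)-\frac{2m+1}{N}\cdot Nr=O(m\log N)+O(\log N)$, and $m/N\to 0$ still forces $e^r+r+1=O(1)$. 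One line of this kind (or the uniform estimate $\log Q_m(\tau N)\le m\log^+\tau+o(N)$, which folds the $m\log\tau$ term harmlessly into the left side since $N-2m>0$) repairs your argument completely.
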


\begin{proof}
We write $s_{m,n}=\log N+r$. Then equation \eqref{recuree1pm25} yields
\begin{equation*}
\begin{split}
0=&\ -\log \binom{m+2n}{m}-\log N!+Ne^r+N\log N+Nr\\
&\ -2\log Q_m(Ne^r)-\log(1+e^r)+\beta(k).
\end{split}
\end{equation*}
%\begin{equation}\label{recuree1pm25}
%\begin{split}
%\log(g_{m,n}(x)-1)=&\ -\log \binom{m+2n}{m} N!+e^x+Nx\\
%&\ -2\log Q_m(e^x)-\log\left(1+\frac{e^x}{N}\right)+R(e^x,N).
%\end{split}
%\end{equation}
By Stirling's formula we have
\begin{equation}\label{trans 2}
\begin{split}
N(e^r+r+1)-\log(1+e^r)-2\log Q_m(Ne^r)=\log \binom{m+2n}{m}+\frac{1}{2}\log N+O(1).
\end{split}
\end{equation}
When $m=0$, then from \cite[Lemma~3.3]{Bergweilereremenko2019} we know that $r=O(1)$. So we suppose that $m\geq 1$. Note that
\begin{equation}\label{trans 3}
\begin{split}
0<\log \binom{m+2n}{m}=\log\frac{(m+2n)!}{m!(2n)!}\leq m\log N.
\end{split}
\end{equation}
By \eqref{recuree1pm72} we see that $\log N=O(\log k)$. When $r\leq 0$, if $r$ has large modulus, then by the estimates in \eqref{recuree1pm23bea1} and \eqref{recuree1pm72} we have $\frac{N}{i+2n}e^{r}<1/2$ for all $i=1,\cdots,m$ so that $\log Q_m(Ne^r)=O(1)$. Thus by \eqref{trans 2} we see that $r=O(1)$ when $r\leq 0$. On the other hand, if $r>0$, we first suppose that $r$ is large. If $\delta\gamma\leq 1$, then $m=1$ and thus $\log Q_1(Ne^r)=\log(1+e^r)=r+O(1)$; if $\delta\gamma>1$, recalling that $A_i=\frac{(2n)!}{(i+2n)!}$, $i=1,\cdots,m$, then in a similar way we have
\begin{equation}\label{trans 4}
\begin{split}
\log Q_m(Ne^r)=mr+\log (N^{m}A_{m})+O(1)=mr+O(1).
\end{split}
\end{equation}
Together with \eqref{trans 3} and \eqref{trans 4} and also the estimates in \eqref{recuree1pm23bea1} and \eqref{recuree1pm72}, equation \eqref{trans 2} implies that
\begin{equation}\label{trans 5}
\begin{split}
e^r+r+1-\frac{2m+1}{N}r=O(1).
\end{split}
\end{equation}
Again, by the estimates in \eqref{recuree1pm23bea1} and \eqref{recuree1pm72}, we see from \eqref{trans 5} that $r\to\infty$ is impossible. Therefore, we must also have $s_{m,n}=\log N+O(1)$.

\end{proof}

Let $M,N$ be two positive integers such that $M>N$. We put $N=m+2n+1$ and $M=\mathfrak{m}+2\mathfrak{n}+1$. From now on we shall assume that there exists a constant $C>1$ such that $\mathfrak{m}+2\mathfrak{n}\leq C(m+2n)$. Then clearly
\begin{equation}\label{recuree1pm31}
\begin{split}
M\leq CN.
\end{split}
\end{equation}
The estimate in \eqref{recuree1pm73} implies that $C\to1$ as $N\to\infty$ by \eqref{recuree1pm18}. Below we begin to analyse the asymptotic behavior of $\phi(z)$ and prove the following

\begin{lemma}\label{Lemma4}
Let $M,N$ be two integers as before. Then
\begin{itemize}
  \item [(1)] if $\mathfrak{m}>m$, then $\phi(x)$ has at least one fixed point $p$ and each fixed point $p$ of $\phi$ satisfies $p=\log N+O(1)$ as $k\to\infty$;
  \item [(2)] if $\mathfrak{m}=m$, then $\phi(x)$ has uniquely one fixed point $p$ such that $p=\log N+O(1)$ as $k\to\infty$;
  \item [(3)] if $\mathfrak{m}<m$, then $\phi(x)$ has uniquely one fixed point $p$ such that $p=\frac{l-1}{l}\log N+O(1)$ for an integer $l=M-N$ as $k\to\infty$;
  \item [(4)] in all of the above three cases, $\phi(x)<x$ for $x<p+O(1)$ and $\phi(x)>x$ for $x>p+O(1)$ and, moreover, we have $\phi(x)\leq \log N+O(1)$ for all $x\leq\log N$.
\end{itemize}

\end{lemma}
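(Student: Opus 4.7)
The plan is to reduce the problem to studying zeros of the single-variable function
\[
\Psi(x) := \log\bigl(g_{\mathfrak{m},\mathfrak{n}}(x)-1\bigr) - \log\bigl(g_{m,n}(x)-1\bigr),
\]
since $\phi(x) = x$ if and only if $\Psi(x) = 0$. Substituting \eqref{recuree1pm25} on both terms and writing $l := M - N > 0$, this gives
\[
\Psi(x) = c_0 + l x + 2\log\frac{Q_m(e^x)}{Q_{\mathfrak{m}}(e^x)} + \log\frac{M(N+e^x)}{N(M+e^x)} + \Delta R(x),
\]
with $c_0 = \log\bigl[\binom{m+2n}{m}N!\bigr] - \log\bigl[\binom{\mathfrak{m}+2\mathfrak{n}}{\mathfrak{m}}M!\bigr]$ and $\Delta R(x) = O(\beta(k))$ uniformly in $x$. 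Locating the zeros of $\Psi$ will pin down the fixed points of $\phi$.

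First I would establish the \emph{boundary behavior}: as $x \to -\infty$, both $Q$-polynomials tend to $1$ and the last logarithm vanishes, leaving $\Psi \sim lx \to -\infty$; as $x \to +\infty$, using $Q_m(e^x)\sim A_m e^{mx}$ and $Q_{\mathfrak{m}}(e^x)\sim \tilde{A}_{\mathfrak{m}}e^{\mathfrak{m}x}$, the leading coefficient of $\Psi$ is $l-2(\mathfrak{m}-m) = 2(\mathfrak{n}-n)-(\mathfrak{m}-m)$. This is automatically positive in cases (2) and (3); in case (1), positivity as $k \to \infty$ is guaranteed by the construction in Lemma~\ref{Lemma2} via $\mathfrak{m}-m = o(l)$. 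Hence $\Psi(+\infty) = +\infty$ in all three cases, and IVT gives at least one fixed point. For \emph{uniqueness} in cases (2) and (3), I would compute the derivative via the identity $g'_{m,n}(x)/(g_{m,n}(x)-1) = N/(1-U_{m,n}(e^x))$ (from combining \eqref{recuree1pm10} with \eqref{recuree1pm27}), obtaining
\[
\Psi'(x) = \frac{M}{1-U_{\mathfrak{m},\mathfrak{n}}(e^x)} - \frac{N}{1-U_{m,n}(e^x)},
\]
and use the bounds on $U = G_1 - 2G_2$ from the proof of Lemma~\ref{Lemma3} (together with the leading-order relation $1-U(y)\sim N/(y+N)$) to show $\Psi' > 0$ throughout.

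To \emph{locate} $p$, I would substitute the ansatz into $\Psi$. In cases (1) and (2), at $x = \log N$, Stirling yields $c_0 = (m-\mathfrak{m}-l)\log N + O(1)$, so $c_0 + l\log N = (m-\mathfrak{m})\log N + O(1)$. A direct calculation shows $A_i N^i = 1 + O(m^2/N)$ uniformly for $i \leq m \ll 2n$, so $Q_m(N) \approx m+1$ and $Q_{\mathfrak{m}}(N) \approx \mathfrak{m}+1$, contributing $2\log((m+1)/(\mathfrak{m}+1))$; the remaining $\log[M(N+e^x)/(N(M+e^x))]$ is $O(l/N)$. Combining these and using $(\mathfrak{m}-m)\log N = o(l)$ from the construction in Lemma~\ref{Lemma2}, the monotonicity of $\Psi$ pins its zero to $\log N + O(1)$. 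In case (3) the construction in Lemma~\ref{Lemma2} forces $m - \mathfrak{m} = 1$, and the conjectured $e^p < N$ gives $A_i(e^p)^i = O(N^{-i/l})$, hence $Q_m(e^p)/Q_{\mathfrak{m}}(e^p) \to 1$ and the last logarithm is $o(1)$. The balance then reduces to $c_0 + lp = O(1)$; inserting $c_0 = -(l-1)\log N + O(1)$ yields $p = \tfrac{l-1}{l}\log N + O(1)$.

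Finally, the \emph{sign assertions} in (4) follow from the equivalence $\phi(x) < x \Leftrightarrow g_{\mathfrak{m},\mathfrak{n}}(x) < g_{m,n}(x) \Leftrightarrow \Psi(x) < 0$ (and analogously for $>$), combined with the monotonicity of $\Psi$ through $p$. For the last inequality $\phi(x) \leq \log N + O(1)$ for $x \leq \log N$: in cases (1) and (2) it follows from $p = \log N + O(1)$ and the monotonicity of $\phi$; in case (3), $\phi$ increasing together with $\phi(\log N) \leq \log M + O(1) = \log N + O(1)$ (using \eqref{recuree1pm73}) suffices. The main obstacle is the precise bookkeeping in the third paragraph: executing the Stirling-based cancellation in $c_0 + l\log N$ and controlling $Q_m(e^x)$ uniformly in the intermediate regime $e^x \asymp N$ (where neither $Q_m \approx 1$ nor $Q_m \approx A_m e^{mx}$ holds, and all coefficients $A_i N^i$ are genuinely comparable in size), while tracking the $\beta(k)$-type errors consistently.
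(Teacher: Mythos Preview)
Your approach is essentially the same as the paper's: both reduce the fixed-point equation $g_{\mathfrak{m},\mathfrak{n}}(p)=g_{m,n}(p)$ to the asymptotic identity \eqref{recuree1pm25}, write $p=\log N+r$ (or the analogue in case~(3)), and solve for $r$ via Stirling-type cancellation. The only structural difference is the choice of auxiliary function: the paper works with $d(x)=g_{\mathfrak{m},\mathfrak{n}}(x)-g_{m,n}(x)$ and its explicit derivative \eqref{recuree1pm32fu1}, whereas you work with the logarithmic version $\Psi(x)=\log\frac{g_{\mathfrak{m},\mathfrak{n}}(x)-1}{g_{m,n}(x)-1}$. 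Since $\Psi$ and $d$ have the same sign, both lead to the same fixed-point analysis; your route makes the Stirling bookkeeping slightly more transparent because $\Psi$ is already in additive form.

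Two places deserve tightening. First, for uniqueness in cases~(2) and~(3) you assert $\Psi'>0$ throughout via the identity $\Psi'(x)=M/(1-U_{\mathfrak{m},\mathfrak{n}})-N/(1-U_{m,n})$ together with the leading-order $1-U(y)\sim N/(y+N)$. That leading-order relation gives difference $\sim l$, but the error from Lemma~\ref{Lemma3} is of size $O(my/N(y+N))$, which for $y$ large is $O(m/N)\cdot y$ and can dominate $l$; so $\Psi'>0$ is not immediate from those bounds alone. The paper sidesteps this by showing directly from \eqref{recuree1pm32fu1} that, when $\mathfrak m\le m$, the bracket $e^{lx}-C(Q_{\mathfrak m}/Q_m)^2$ has exactly one sign change (the left factor runs from $0$ to $\infty$ while the right stays bounded), hence $d$ has exactly one zero. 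That argument transfers to $\Psi$ without needing $\Psi'>0$ globally. Second, your last line for part~(4) in case~(3), ``$\phi(\log N)\le \log M+O(1)$ using \eqref{recuree1pm73}'', is not justified: \eqref{recuree1pm73} only says $M/N\to 1$, it gives no upper bound on $\phi(\log N)$ by itself. The paper handles this case by substituting $x=\log N$ and $y=\phi(\log N)$ into \eqref{recuree1pm25} and showing that $\phi(\log N)-\log N\to\infty$ contradicts the resulting equation \eqref{recuree1pm32fu2guyki1}; you will need an argument of that type here.
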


\begin{proof}
Let $d:\mathbb{R}\to\mathbb{R}$, $d(x)=g_{\mathfrak{m},\mathfrak{n}}(x)-g_{m,n}(x)$. We consider the zeros of $d$. Now
\begin{equation}\label{recuree1pm32fu1}
\begin{split}
d'(x)=\left[e^{(M-N)x}-\frac{\binom{\mathfrak{m}+2\mathfrak{n}}{\mathfrak{m}} (\mathfrak{m}+2\mathfrak{n})!}{\binom{m+2n}{m} (m+2n)!}\frac{\left(\sum_{\mathfrak{i}=0}^{\mathfrak{m}}A_\mathfrak{i}e^{\mathfrak{i}x}\right)^2}{\left(\sum_{i=0}^{m}A_ie^{ix}\right)^2}\right]D(e^x),
\end{split}
\end{equation}
where
\begin{equation*}
\begin{split}
D(e^x)=\frac{1}{\binom{\mathfrak{m}+2\mathfrak{n}}{\mathfrak{m}} (\mathfrak{m}+2\mathfrak{n})!}\frac{e^{(m+2n+1)x}e^{e^{x}}}{\left(\sum_{i=0}^{\mathfrak{m}}A_ie^{ix}\right)^2}.
\end{split}
\end{equation*}
Note that $D(e^x)\not=0$ for all $x\in \mathbb{R}$. By Lemma~\ref{Lemma2} it is possible that $\mathfrak{m}<m$ or $\mathfrak{m}=m$ or $\mathfrak{m}>m$, and the case $\mathfrak{m}<m$, i.e., $\mathfrak{m}=0$ and $m=1$, occurs only when $\delta\gamma<1$.

When $M>N$, from \eqref{recuree1pm25} we see that $d(x)<0$ if $x$ is negative and of sufficiently large modulus and increases to $+\infty$ as $x\to+\infty$. This implies that $\phi$ has at least one fixed point.
In particular, if $\mathfrak{m}\leq m$, since $e^{(M-N)x}$ is strictly increasing on $\mathbb{R}$, \eqref{recuree1pm32fu1} implies that the derivative $d'(x)$ has exactly one sign change, namely at the point $q$, such that $d'(x)<0$ when $x<q$ and $d'(x)>0$ when $x>q$. This implies that $\phi$ has exactly one fixed point $p>q$. In the case $\mathfrak{m}>m$, it is possible that $(M-N)\geq 2(\mathfrak{m}-m)$ or $(M-N)<2(\mathfrak{m}-m)$, the derivative $d'(x)$ may have more than one sign change, namely at the point $q$, which implies that $\phi$ may have more than one fixed point.

We denote $p$ to be a fixed point of $\phi$. To determine the asymptotic behavior of $p$ as $N\to\infty$, we
note \eqref{recuree1pm25} implies
\begin{equation}\label{recuree1pm32fu2}
\begin{split}
&\ -\log \binom{\mathfrak{m}+2\mathfrak{n}}{\mathfrak{m}}M!+Mp-2\log Q_\mathfrak{m}(e^p)-\log\left(1+\frac{e^p}{M}\right)\\
=&\ -\log \binom{m+2n}{m}N!+Np-2\log Q_m(e^p)-\log\left(1+\frac{e^p}{N}\right)+\beta(k).
\end{split}
\end{equation}
Firstly, in the case $\mathfrak{m}>m$, we write $p=\log N+r$. Then it follows from \eqref{recuree1pm32fu2} that
\begin{equation*}
\begin{split}
(M-N)r=\log \frac{\binom{\mathfrak{m}+2\mathfrak{n}}{\mathfrak{m}}M!}{\binom{m+2n}{m}N!}-(M-N)\log N+\log\frac{1+\frac{N}{M}e^r}{1+e^r}+2\log\frac{Q_\mathfrak{m}(Ne^r)}{Q_m(Ne^r)}+\beta(k).
\end{split}
\end{equation*}
Now,
\begin{equation*}
\begin{split}
&\ \left|\log\frac{1+\frac{N}{M}e^r}{1+e^r}\right|=\left|\log\left(1-\frac{M-N}{M}\frac{e^r}{1+e^r}\right)\right|\\
\leq &\ \log\left(1-\frac{M-N}{M}\right) \leq \left|\log\frac{N}{M}\right|=\log\left(1+\frac{M-N}{N}\right)\leq \frac{M-N}{N}.
\end{split}
\end{equation*}
Hence
\begin{equation*}
\begin{split}
r=\frac{\log \frac{\binom{\mathfrak{m}+2\mathfrak{n}}{\mathfrak{m}}}{\binom{m+2n}{m}}+
\log \frac{M!}{N!}-(M-N)\log N+2\log\frac{Q_\mathfrak{m}(Ne^r)}{Q_m(Ne^r)}}{M-N} + \beta(k).
\end{split}
\end{equation*}
An application of Stirling's formula now yields
\begin{equation*}
\begin{split}
r=\frac{\log \frac{\binom{\mathfrak{m}+2\mathfrak{n}}{\mathfrak{m}}}{\binom{m+2n}{m}}}{M-N}
+\frac{M\log\frac{M}{N}}{M-N}-1-
\frac{1}{2}\frac{\log\frac{M}{N}}{M-N}+2\frac{\log\frac{Q_\mathfrak{m}(Ne^r)}{Q_m(Ne^r)}}{M-N}+\beta(k).
\end{split}
\end{equation*}
Note that
\begin{equation}\label{recuree1pm32fu2 fq5}
\begin{split}
\frac{\log \frac{M}{N}}{M-N}\sim\frac{\log \frac{M}{N}}{N(M/N-1)}\leq \frac{1}{N}.
\end{split}
\end{equation}
Thus we may write the asymptotic formula for $r$ in the form
\begin{equation}\label{recuree1pm33}
\begin{split}
r=\frac{\log\frac{\binom{\mathfrak{m}+2\mathfrak{n}}{\mathfrak{m}}}{\binom{m+2n}{m}}}{M-N}+
%\frac{M\log\frac{M}{N}}{M-N}-1+
2\frac{\log\frac{Q_\mathfrak{m}(Ne^r)}{Q_m(Ne^r)}}{M-N}+\beta(k).
\end{split}
\end{equation}
We need to show that $r=O(1)$ as $N\to\infty$. When $0\leq \delta\gamma\leq 1$, we know from Lemma~\ref{Lemma2} that $m_k$ is equal to either $0$ or $1$. Since $\mathfrak{m}>m$, we have
\begin{equation*}
\begin{split}
1<\frac{(\mathfrak{m}+2\mathfrak{n})!}{(m+2n)!}\frac{m!}{\mathfrak{m}!}\frac{(2n)!}{(2\mathfrak{n})!}=\frac{(\mathfrak{m}+2\mathfrak{n})\cdots(m+2n+1)}{\mathfrak{m}\cdots (m+1)\cdot(2\mathfrak{n})\cdots(2n+1)}\leq C^{M-N}.
\end{split}
\end{equation*}
Therefore,
\begin{equation}\label{recuree1pm33fu3}
\begin{split}
0<\log \frac{\binom{\mathfrak{m}+2\mathfrak{n}}{\mathfrak{m}}}{\binom{m+2n}{m}}=\log \frac{[(\mathfrak{m}+2\mathfrak{n})!][m!][(2n)!]}{[(m+2n)!][\mathfrak{m}!][(2\mathfrak{n})!]}\leq (M-N)\log C.
\end{split}
\end{equation}
Moreover, by \eqref{recuree1pm23bea1} and the function $x\to (x\log x)/(x-1)$ is increasing on the interval $(1,C]$ and since $\lim_{x\to1}(x\log x)/(x-1)=1$, we deduce from \eqref{recuree1pm33} that
\begin{equation}\label{recuree1pm33fu4h0}
\begin{split}
r=2\frac{\log\frac{Q_\mathfrak{m}(Ne^r)}{Q_m(Ne^r)}}{M-N}+O(1).
\end{split}
\end{equation}
Recall that $A_i=\frac{(2n)!}{(i+2n)!}$, $i=1,\cdots,m$. When $r\leq 0$, if $r$ has large modulus, then by \eqref{recuree1pm71} and \eqref{recuree1pm72} we see that $\frac{N}{i+2n}$ is bounded and thus $\frac{N}{i+2n}e^{r}<1/2$ for all $i=1,\cdots,m$ so that $\log Q_\mathfrak{m}(Ne^r)/Q_m(Ne^r)=O(1)$. Thus by \eqref{recuree1pm33} we see that $r=O(1)$ when $r\leq 0$. On the other hand, if $r>0$, we suppose that $r$ is large. If $\delta\gamma\leq 1$, then $\mathfrak{m}=1$ and $m=0$ and thus $\log Q_1(Ne^r)=\log(1+e^r)=r+o(1)$. Since $M-N=2(\mathfrak{n}-n)-1$ we see that $M-N=1,3,\cdots$, which together with \eqref{recuree1pm33fu4h0} implies that $r=O(1)$. If $\delta\gamma>1$, in a similar way we have
\begin{equation}\label{recuree1pm33fu4h0ar1}
\begin{split}
\log Q_\mathfrak{m}(Ne^r)=\mathfrak{m}r+\log (N^{\mathfrak{m}}A_{\mathfrak{m}})+O(1)=\mathfrak{m}r+O(1).
\end{split}
\end{equation}
Moreover, by \eqref{recuree1pm13fu2} we see that $\mathfrak{m}\leq 2m$. Together with \eqref{recuree1pm32fu2 fq5} we have
\begin{equation}\label{recuree1pm33fu4h0ar1ogu}
\begin{split}
\frac{\log\frac{Q_\mathfrak{m}(Ne^r)}{Q_m(Ne^r)}}{M-N}=\frac{\mathfrak{m}-m}{M-N}r+ \frac{(\mathfrak{m}-m)\log\frac{N}{M}}{M-N}+O(1)=\frac{\mathfrak{m}-m}{M-N}r+O(1).
\end{split}
\end{equation}
By Lemma~\ref{Lemma2}, $\mathfrak{m}-m$ is an odd integer. Since $M-N=2(\mathfrak{n}-n)-(\mathfrak{m}-m)$, we see that $2(\mathfrak{m}-m)/(M-N)\not=1$. Recall that $m_k$ is obtained in a similar way as $2n_k+1$ obtained from \eqref{recuree ne13}. Then we have
\begin{equation}\label{recuree1pm18uiyjpi}
\begin{split}
\mathfrak{m}-m=m_{k+1}-m_k=\delta\gamma(\delta\gamma-1)\alpha(2\pi k)(2\pi)^{\delta\gamma-1}k^{\delta\gamma-2}+O(k^{\delta\gamma-3})+O(1),
\end{split}
\end{equation}
as $k\to\infty$. If $1<\delta\gamma\leq 2$, then $\mathfrak{m}-m$ is bounded and thus we may obtain that $r=O(1)$ just as in the case $\mathfrak{m}=1$ and $m=0$. If $\delta\gamma>2$, then we also have $\gamma\geq \delta\gamma>2$. It follows from the estimate in \eqref{recuree ne13} that
\begin{equation}\label{recuree1pm72ij;f}
\begin{split}
2(\mathfrak{n}-n)=2(n_{k+1}-n_k)=\gamma(\gamma-1)(2\pi)^{\gamma-1}k^{\gamma-2}(1+\beta(k)).
\end{split}
\end{equation}
When $k$ is large, then from \eqref{recuree1pm18uiyjpi} and \eqref{recuree1pm72ij;f} we see that $\frac{\mathfrak{m}-m}{M-N}\leq \varepsilon$ for a small $\varepsilon>0$. Then we see from \eqref{recuree1pm33fu4h0} and \eqref{recuree1pm33fu4h0ar1ogu} that $r=O(1)$. From the above discussions, we also have $r=O(1)$ when $r>0$. We conclude that $p=\log N+O(1)$ when $\mathfrak{m}>m$. Thus we have the first assertion.

Secondly, in the case $\mathfrak{m}=m$, by Lemma~\ref{Lemma2} we must have $\delta\gamma\leq 1$ and $\mathfrak{m}=m=0$ or $\mathfrak{m}=m=1$. When $\mathfrak{m}=m=0$, we know from \cite{Bergweilereremenko2019} that $r=o(1)$ as $N\to\infty$. When $\mathfrak{m}=m=1$, we also write $p=\log N+r$. Now, the two terms $2\log Q_\mathfrak{m}(e^p)$ and $2\log Q_m(e^p)$ in \eqref{recuree1pm32fu2} cancel out. Then by following the same arguments as in previous case we easily obtain that $r=\beta(k)$. Thus we have second assertion.

Thirdly, in the case $\mathfrak{m}<m$, by Lemma~\ref{Lemma2} we must have $\delta\gamma<1$ and $\mathfrak{m}=0$ and $m=1$. Suppose that $M-N=l$ for an integer $l\geq 1$ and write $p=\frac{l-1}{l}\log N+r$. Since $\binom{\mathfrak{m}+2\mathfrak{n}}{\mathfrak{m}}/\binom{m+2n}{m}=1/N$ and $M-N=l$, then by similar arguments as before, we obtain, instead of equation \eqref{recuree1pm33fu4h0}, that
\begin{equation}\label{recuree1pm32fu2 fq4iul}
\begin{split}
r=2\frac{-\log Q_1(N^{(l-1)/l}e^r)}{M-N}+\beta(k).
\end{split}
\end{equation}
Note that $(l-1)/l\leq 1$. When $r\leq 0$, we only need to follow the same arguments as in previous case to obtain that $r=O(1)$; when $r\geq 0$, since $Q_1(N^{(l-1)/l}e^r)>1$, we see from \eqref{recuree1pm32fu2 fq4iul} that $r=O(1)$. Thus we have the third assertion.

Finally, together with the zeros of $d(x)$ we may say that $\phi(x)\leq x$ when $x<p+O(1)$ and also that $\phi(x)\geq x$ when $x>p+O(1)$ in all the three cases $\mathfrak{m}>m$, $\mathfrak{m}=m$ and $\mathfrak{m}<m$. If $\mathfrak{m}\geq m$, then from the first and second assertions we have $\phi<x$ when $x<p=\log N+O(1)$ and thus $\phi<x+O(1)\leq \log N+O(1)$ when $x\leq \log N$. If $\mathfrak{m}<m$, then $\mathfrak{m}=0$ and $m=1$ and we have from \eqref{recuree1pm25} that
\begin{equation}\label{recuree1pm32fu2guyki}
\begin{split}
&\ -\log \binom{\mathfrak{m}+2\mathfrak{n}}{\mathfrak{m}}M!+e^{\phi}+M\phi-2\log Q_\mathfrak{m}(e^{\phi})-\log\left(1+\frac{e^{\phi}}{M}\right)\\
=&\ -\log \binom{m+2n}{m}N!+e^{x}+Nx-2\log Q_m(e^x)-\log\left(1+\frac{e^x}{N}\right)+\beta(k).
\end{split}
\end{equation}
Suppose that $\phi-\log N\geq 0$ is unbounded. Note that $\log Q_\mathfrak{m}(e^x)=0$ and $\log Q_\mathfrak{m}(e^x)=O(\log N)$. Also, $\frac{e^x}{N}\leq 1$ and
\begin{equation*}
\begin{split}
\log\left(1+\frac{N}{M}e^{\phi-\log N}\right)=\log\left(\frac{N}{M}e^{\phi-\log N}\right)+O(1).
\end{split}
\end{equation*}
Note that $\log N=O(\log k)$. By the estimate in \eqref{recuree1pm73} we have $(\log N)/N)=\beta(k)$. Then, together with Stirling's formula, it follows from \eqref{recuree1pm32fu2guyki} that
\begin{equation}\label{recuree1pm32fu2guyki1}
\begin{split}
e^{\phi-\log N}+\frac{M-1}{N}(\phi-\log N)=\frac{e^x}{N}+x-\log N+O(1).
\end{split}
\end{equation}
However, since $x\leq \log N$, if we let $\phi-\log N\to\infty$, then we get a contradiction from \eqref{recuree1pm32fu2guyki1}. This implies that we always have $\phi\leq \log N+O(1)$ when $x\leq \log N$. Thus we have the fourth assertion and also complete the proof.

\end{proof}

We note that $\mathfrak{m}\geq m$ for all $k\ge1$ when we assume that $\delta\gamma\geq 1$ in Lemma~\ref{Lemma2}. We also note that, if $\gamma>2$, writing $\epsilon=\min\{1,\gamma-2-\varepsilon\}$ for a small $\varepsilon>0$, then by \eqref{recuree1pm18uiyjpi} and \eqref{recuree1pm72ij;f} we have
\begin{equation*}
\begin{split}
\frac{\log N_k}{N_{k+1}-N_k}=O\left(\frac{1}{k^{\epsilon}}\right)=\beta(k).
\end{split}
\end{equation*}
This implies that the first term in \eqref{recuree1pm33} is always of type $\beta(k)$ in all of the three cases $\mathfrak{m}>m$, $\mathfrak{m}=m$ and $\mathfrak{m}<m$. Then from the above discussions in the proof of Lemma~\ref{Lemma4} we always have $p=\log N+O(1)$.

In the proof of Lemma~\ref{Lemma3 trans} and Lemma~\ref{Lemma4}, though the error terms $\beta(k)=O((\log(k+2))^{-2})$ and $O(1)$ may depend on $r$, we may suitably choose a new $O(1)$ so that they are only dependent on the constant $C$ in \eqref{recuree1pm31}. Below we shall always assume that the terms $O(1)$ appearing are only dependent on the constant $C$, not on other variables.

\begin{lemma}\label{Lemma5}
For $\mathfrak{m},\mathfrak{n},m,n,M,N$ and $\phi:\mathbb{R}\to\mathbb{R}$ there exist positive constants $c_1,\cdots,c_8$ depending only on the constant $C$ in \eqref{recuree1pm31} such that
\begin{equation}\label{recuree1pm36}
\begin{split}
|\phi(x)-x|\leq c_1e^{-x/2}\leq \frac{c_1}{N} \quad \text{for} \quad x>8\log N
\end{split}
\end{equation}
and
\begin{equation}\label{recuree1pm37}
\begin{split}
\left|\phi(x)-\frac{M}{N}x+\frac{1}{N}\log\frac{M!}{N!}\right|\leq c_2e^x \quad \text{for} \quad x<\log N.
\end{split}
\end{equation}
Moreover, with $s_{m,n}$ defined in \eqref{trans 0} we have
\begin{equation}\label{recuree1pm38}
\begin{split}
\left|\phi(x)-x\right|\leq c_3 \quad \text{for} \quad x>s_{m,n}
\end{split}
\end{equation}
and
\begin{equation}\label{recuree1pm39}
\begin{split}
\left|\phi(x)-\frac{M}{N}x+\frac{1}{N}\log\frac{M!}{N!}\right|\leq c_4 \quad \text{for} \quad x<\log N.
\end{split}
\end{equation}
Finally,
\begin{equation}\label{recuree1pm40}
\begin{split}
|\phi'(x)-1|\leq c_5\left(e^{-x/2}+\frac{1}{(\log(k+2))^2}\right)\leq \frac{c_5}{(\log(k+2))^2} \quad \text{for} \quad x>8\log N
\end{split}
\end{equation}
and
\begin{equation}\label{recuree1pm41}
\begin{split}
\left|\phi'(x)-\frac{M}{N}\right|\leq \frac{c_6e^x}{N} \quad \text{for} \quad x<\log N,
\end{split}
\end{equation}
as well as
\begin{equation}\label{recuree1pm42}
\begin{split}
c_7\leq |\phi'(x)|\leq c_8 \quad \text{for all} \quad x\in \mathbb{R}.
\end{split}
\end{equation}

\end{lemma}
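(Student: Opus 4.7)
The plan is to derive all seven estimates from a single \emph{master identity} obtained by inserting the asymptotic expansion \eqref{recuree1pm25} of Lemma~\ref{Lemma3} into each side of the defining relation $g_{\mathfrak{m},\mathfrak{n}}(x) = g_{m,n}(\phi(x))$. After subtracting $1$ and taking logarithms, one gets
\begin{equation*}
e^{\phi(x)} + N\phi(x) - 2\log Q_m(e^{\phi(x)}) - \log\!\left(1+\tfrac{e^{\phi(x)}}{N}\right) = e^x + Mx - 2\log Q_{\mathfrak{m}}(e^x) - \log\!\left(1+\tfrac{e^x}{M}\right) + L + \beta(k),
\end{equation*}
where $L = \log\!\bigl[\binom{\mathfrak{m}+2\mathfrak{n}}{\mathfrak{m}} M!/\!\bigl(\binom{m+2n}{m} N!\bigr)\bigr]$ and, by the computation in the proof of Lemma~\ref{Lemma4}, $L = O(|M-N|)$.

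For the position estimates \eqref{recuree1pm36}--\eqref{recuree1pm37} I would split into two regimes. On $\{x > 8\log N\}$ the exponential $e^x$ dominates every other term on both sides (since $e^x \geq N^8 \gg Nx \geq |M-N|x$), so writing $\phi(x) = x + \delta$ and expanding $e^{x+\delta}$ to first order gives $(e^x + N)\delta = O(Nx) + O(|M-N|)$, whence $|\delta| \leq c_1 e^{-x/2} \leq c_1/N^4 \leq c_1/N$, which is \eqref{recuree1pm36}. On $\{x < \log N\}$ the quantities $e^x, e^{\phi(x)}$ are at most $N$, the logarithmic corrections $\log(1 + e^x/M)$ and $\log(1+e^{\phi(x)}/N)$ are $O(e^x/N)$, and the factors $Q_{\mathfrak{m}}(e^x)$ and $Q_m(e^{\phi(x)})$ are each $1 + O(e^x/N)$ by the explicit form $A_i = (2n)!/(i+2n)!$ together with \eqref{recuree1pm23bea1}. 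The master identity therefore collapses to $N\phi(x) = Mx - \log(M!/N!) + O(|M-N|) + O(e^x)$, which yields \eqref{recuree1pm37} after dividing by $N$. The coarser bounds \eqref{recuree1pm38}--\eqref{recuree1pm39} then follow by extending across the bounded-in-log-scale gap $[\log N, 8\log N]$, on which Lemma~\ref{Lemma3 trans} places $s_{m,n}$ and Lemma~\ref{Lemma4}(4) keeps $|\phi(x) - x|$ bounded by a constant depending only on $C$.

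For the derivative estimates I would differentiate the defining relation to get $\phi'(x) = g'_{\mathfrak{m},\mathfrak{n}}(x)/g'_{m,n}(\phi(x))$, substitute the closed form \eqref{recuree1pm10}, and insert the position estimates just established. On $\{x > 8\log N\}$, combining $\phi(x) = x + O(e^{-x/2})$ with the explicit formula produces $\phi'(x) - 1 = O(e^{-x/2}) + O((\log(k+2))^{-2})$, where the second error tracks the factors $(M-N)/N$ and $(\log N)/N$ via \eqref{recuree1pm72} and \eqref{recuree1pm23bea1}; this is \eqref{recuree1pm40}. On $\{x < \log N\}$ the same formula with the linear approximation gives $\phi'(x) = M/N + O(e^x/N)$, which is \eqref{recuree1pm41}. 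Finally \eqref{recuree1pm42} follows by patching the two regimes through the bounded intermediate gap, on which \eqref{recuree1pm10} visibly shows that $g'_{\mathfrak{m},\mathfrak{n}}$ and $g'_{m,n}$ are comparable up to constants depending only on $C$.

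The main obstacle I anticipate is the bookkeeping of error shapes: we must ensure that every contribution lands in one of the allowed forms ($c_i e^{-x/2}$, $c_i e^x/N$, or $c_i(\log(k+2))^{-2}$) rather than in the stubborn shape $|M-N|/N$, which does not decay to zero. The key device is the estimate \eqref{recuree1pm23bea1}, which converts the $Q$-contributions (which are driven by $m/N$) into the desired $(\log(k+2))^{-2}$ shape. A secondary subtlety is the case $\mathfrak{m} < m$ covered by Lemma~\ref{Lemma4}(3), where the fixed point of $\phi$ sits at $\tfrac{l-1}{l}\log N + O(1)$ rather than $\log N + O(1)$, so the natural cut-off between the two regimes has to be shifted, though only by a bounded amount that is absorbed into the constants $c_i$.
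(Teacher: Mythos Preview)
Your approach is essentially the same as the paper's: both arguments start from the master identity obtained by applying \eqref{recuree1pm25} to each side of $g_{\mathfrak{m},\mathfrak{n}}(x)=g_{m,n}(\phi(x))$, split into the two regimes $x<\log N$ and $x>\log N$, and handle $\phi'$ via the ratio $g'_{\mathfrak{m},\mathfrak{n}}(x)/g'_{m,n}(\phi(x))$ together with \eqref{recuree1pm10}.

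Two small execution differences are worth noting. First, in the regime $x>\log N$ the paper does not linearize $e^{x+\delta}$; instead it substitutes $x=\log N+s$, $y=\phi(x)=\log N+t$, derives $e^t-e^s\le Cs+O(1)$, and then uses the convexity inequality $t-s\le e^{t-s}-1=(e^t-e^s)e^{-s}$ together with the sign information $t>s-O(1)$ from Lemma~\ref{Lemma4}. This sidesteps the circularity in your ``first-order expansion'' step, where you implicitly assume $|\delta|$ bounded before you have proved it. Second, for $\phi'$ the paper combines \eqref{recuree1pm10} with the master identity \eqref{recuree1pm42fu6} to obtain the clean formula $\log\phi'(x)=y-x+\log(1+M/e^x)-\log(1+N/e^y)+\beta(k)$, from which \eqref{recuree1pm40}--\eqref{recuree1pm42} drop out immediately; your plan of directly inserting the position estimates into the raw ratio works but requires more bookkeeping to reach the same conclusion.
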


\begin{proof}
Let $y=\phi(x)$ so that $g_{\mathfrak{m},\mathfrak{n}}(x)=g_{m,n}(y)$. Recall that in \eqref{recuree1pm24} the term $R(y,N)=\beta(k)$. Therefore, by \eqref{recuree1pm25} we have
\begin{equation}\label{recuree1pm42fu1}
\begin{split}
&\ -\log \binom{\mathfrak{m}+2\mathfrak{n}}{\mathfrak{m}} M!+e^x+Mx-2\log Q_\mathfrak{m}(e^x)-\log\left(1+\frac{e^x}{M}\right)\\
=&\ -\log \binom{m+2n}{m} N!+e^y+Ny-2\log Q_m(e^y)-\log\left(1+\frac{e^y}{N}\right)+\beta(k).
\end{split}
\end{equation}
%where $|\mathcal{R}|\leq 2B$.
Suppose first $x<\log N$. Then by Lemma~\ref{Lemma4} we have $y<\log N+O(1)$. It follows that
\begin{equation*}
\begin{split}
y-\frac{M}{N}x+\frac{1}{N}\log \frac{N!}{M!}=&\ \frac{1}{N}\left(\log\frac{\binom{m+2n}{m}}{\binom{\mathfrak{m}+2\mathfrak{n}}{\mathfrak{m}}}+e^x-e^y-2\log\frac{Q_\mathfrak{m}(e^x)}{Q_m(e^y)}\right)\\
&\ -\frac{1}{N}\left(\log\left(1+\frac{e^x}{M}\right)+\log\left(1+\frac{e^y}{N}\right)+\beta(k)\right).
\end{split}
\end{equation*}
Thus
\begin{equation*}
\begin{split}
\left|y-\frac{M}{N}x+\frac{1}{N}\log \frac{N!}{M!}\right|\leq&\ \frac{1}{N}\left(\left|\log \frac{\binom{\mathfrak{m}+2\mathfrak{n}}{\mathfrak{m}}}{\binom{m+2n}{m}}\right|+e^x+e^y+2\left|\log\frac{Q_m(e^y)}{Q_\mathfrak{m}(e^x)}\right|\right)\\
&\ +\frac{1}{N}\left(\frac{e^x}{M}+\frac{e^y}{N}+\beta(k)\right).
\end{split}
\end{equation*}
Recall that $\binom{\mathfrak{m}+2\mathfrak{n}}{\mathfrak{m}}/\binom{m+2n}{m}=-\log N$ when $\mathfrak{m}<m$ and also the estimate in \eqref{recuree1pm33fu4h0ar1} when $\mathfrak{m}\geq m$.
%and also note that $\log Q_\mathfrak{m}(e^x)\leq \log (1+\mathfrak{m})$, as well as $\log Q_m(e^y)\leq \log(1+m)$. Using Stirling's formula
Then, together with the estimate in \eqref{recuree1pm33fu3}, we have
\begin{equation*}
\begin{split}
\left|y-\frac{M}{N}x+\frac{1}{N}\log \frac{N!}{M!}\right|\leq c+\frac{d(e^x+e^y)}{N}.
\end{split}
\end{equation*}
This yields \eqref{recuree1pm37} and also \eqref{recuree1pm39} by Lemma~\ref{Lemma4}.

Suppose now that $x>\log N$. Using
\begin{equation*}
\begin{split}
\log\left(1+\frac{e^x}{M}\right)=x-\log M+\log\left(1+\frac{M}{e^x}\right)
\end{split}
\end{equation*}
and the corresponding formula for $\log(1+e^y/N)$, we may write \eqref{recuree1pm42fu1} as
\begin{equation}\label{recuree1pm42fu6}
\begin{split}
&\ -\log \binom{\mathfrak{m}+2\mathfrak{n}}{\mathfrak{m}} M!+e^x+(M-1)x-2\log Q_\mathfrak{m}(e^x)+\log \left(1+\frac{M}{e^x}\right)\\
=&\ -\log \binom{m+2n}{m} N!+e^y+(N-1)y-2\log Q_m(e^y)+\log \left(1+\frac{N}{e^y}\right)+\beta(k).
\end{split}
\end{equation}
We write $x=\log N+s$ and $y=\log N+t$ and note that, since $x>\log N$, we have $t>s-O(1)$ by Lemma~\ref{Lemma4}. We obtain
\begin{equation*}
\begin{split}
&\ -\log \binom{\mathfrak{m}+2\mathfrak{n}}{\mathfrak{m}} M!+Ne^s+(M-1)(\log N+s)-2\mathfrak{m}s+\log \left(1+\frac{M}{Ne^s}\right)\\
=&\ -\log \binom{m+2n}{m} N!+Ne^t+(N-1)(\log N+t)-2mt+\log \left(1+\frac{1}{e^t}\right)+O(1)
\end{split}
\end{equation*}
and hence, using Stirling's formula together with the estimates in \eqref{recuree1pm33fu3}, we have
\begin{equation*}
\begin{split}
e^t-e^s=&\ \frac{1}{N}\left(-\log \frac{\binom{\mathfrak{m}+2\mathfrak{n}}{\mathfrak{m}}}{\binom{m+2n}{m}}\frac{M!}{N!}+(M-N)\log N+\log\frac{M}{N}\right)\\
&\ +\left(\frac{M-2\mathfrak{m}}{N}-\frac{1}{N}\right)s-\left(1-\frac{2m+1}{N}\right)t+O(1)\\
\leq&\ \frac{M}{N}s+O(1)\leq Cs+O(1).
\end{split}
\end{equation*}
It follows that
\begin{equation}\label{recuree1pm42fu9}
\begin{split}
0<\phi(x)-x=t-s\leq e^{t-s}-1\leq Cse^{-s}+O(e^{-s})=O(1) \quad \text{for} \quad x>\log N.
\end{split}
\end{equation}
%Obviously, the above estimate hold for $\phi$ when $y>x\geq -\log N$.
For $x>8\log N$ we have $s=x-\log N>3x/4$ and thus $2s/3>x/2$. It follows that $se^{-s}=O(e^{-2s/3})=O(e^{-x/2})$ and thus \eqref{recuree1pm42fu9} yields \eqref{recuree1pm36}. Noting that $s_{m,n}=\log N+O(1)$ by Lemma~\ref{Lemma3 trans} we can also deduce \eqref{recuree1pm38} from \eqref{recuree1pm42fu9}.

Since $g'_{\mathfrak{m},\mathfrak{n}}(x)=g'_{m,n}(\phi(x))\phi'(x)$ by the chain rule, \eqref{recuree1pm10} yields that
\begin{equation*}
\begin{split}
\phi'(x)=&\ \frac{\binom{m+2n}{m}}{\binom{\mathfrak{m}+2\mathfrak{n}}{\mathfrak{m}}}\frac{(m+2n)!}{(\mathfrak{m}+2\mathfrak{n})!}\frac{\left(\sum_{i=0}^{m}A_ie^{iy}\right)^2}{\left(\sum_{\mathfrak{i}=0}^{\mathfrak{m}}A_\mathfrak{i}e^{\mathfrak{i}x}\right)^2}e^{e^{x}+(\mathfrak{m}+2\mathfrak{n}+1)x-e^{y}-(m+2n+1)y}\\
=&\ \frac{\binom{m+2n}{m}}{\binom{\mathfrak{m}+2\mathfrak{n}}{\mathfrak{m}}}\frac{N!}{M!}\frac{\left(\sum_{i=0}^{m}A_ie^{iy}\right)^2}{\left(\sum_{\mathfrak{i}=0}^{\mathfrak{m}}A_\mathfrak{i}e^{\mathfrak{i}x}\right)^2}e^{e^{x}+Mx+\log M-e^{y}-Ny-\log N}.
\end{split}
\end{equation*}
Using \eqref{recuree1pm42fu6} and \eqref{recuree1pm42fu9} we obtain
\begin{equation*}
\begin{split}
\log \phi'(x)=y-x+\log\left(1+\frac{M}{e^x}\right)-\log\left(1+\frac{N}{e^y}\right)+\beta(k)
\end{split}
\end{equation*}
for $x>\log N$. Since $y>x-O(1)\geq \log N-O(1)$ by Lemma~\ref{Lemma4}, we have
\begin{equation*}
\begin{split}
\frac{N}{e^y}\leq e^{O(1)}\frac{M}{e^x}\leq e^{O(1)}\frac{M}{N}\leq O(1)+\beta(k).
\end{split}
\end{equation*}
Together with \eqref{recuree1pm36} and \eqref{recuree1pm38} and since for $A>0$ there exists $B>0$ such that $|t-1|\leq B|\log t|$ whenever $|\log t|\leq A$, the same arguments as the ones used before now yields \eqref{recuree1pm40}, as well as
\begin{equation}\label{recuree1pm42fu14}
\begin{split}
|\log \phi'(x)|=O(1) \quad \text{for} \quad x>\log N.
\end{split}
\end{equation}
Similarly, \eqref{recuree1pm42fu1} yields
\begin{equation*}
\begin{split}
\log \phi'(x)=\log \frac{M}{N}+\log\left(1+\frac{M}{e^x}\right)-\log\left(1+\frac{N}{e^y}\right)+\beta(k)
\end{split}
\end{equation*}
and hence, by the estimates for $R(e^x,M)$ and $R(e^y,N)$, that
\begin{equation*}
\begin{split}
\log \left(\frac{N}{M}\phi'(x)\right)=O\left(\frac{e^x}{M}\right) \quad \text{for} \quad x<\log N.
\end{split}
\end{equation*}
This yields \eqref{recuree1pm41} as well as $|\log \phi'(x)|=O(1)$ for $x<\log N$ which together with \eqref{recuree1pm42fu14} yields \eqref{recuree1pm42}.

\end{proof}

\subsection{Definition of a quasimeromorphic map}\label{Definition of a quasimeromorphic map} % use lowercase except for proper names

Now we begin to define a quasimeromorphic map by gluing functions $g_{m,n}$ with different pair of values $(m,2n)$ as in \cite[Section~3.2]{Bergweilereremenko2019}. By modifying the functions $g_{m,n}$ slightly, we obtain closely related functions $u_{m,n}$ and $v_{m,n}$ and then glue restrictions of these maps to half-strips along horizontal lines to obtain quasimeromorphic maps $U$ and $V$ which are defined in the right and left half-plane respectively. Then we will glue these functions along the imaginary axis to obtain a quasimeromorphic map $G$ in the plane. The following definitions of $U$ and $V$ are direct quotation from \cite[Section~3.2]{Bergweilereremenko2019}.

The maps $U$, $V$ and $G$ will commute with complex conjugation, so it will be enough to define them in the upper half-plane. We begin by constructing the map $U$. Instead of $g_{m,n}$ we consider the map
\begin{equation*}
\begin{split}
u_{m,n}: \{z\in\mathbb{C}: \Re z\geq0 \}\to\mathbb{C}, \quad u_{m,n}=g_{m,n}(z+s_{m,n}).
\end{split}
\end{equation*}
Note that $g_{m,n}$ is increasing on the real line, and maps $[0,\infty)$ onto $[2,\infty)$.

Let $(n_k)$ and $(m_k)$ be the two sequences from Lemma~\ref{Lemma1} and Lemma~\ref{Lemma2}, respectively and write $N_k=m_k+2n_k+1$. Put $U(z)=u_{m_k,n_k}$ in the half-strip
\begin{equation*}
\begin{split}
\Pi_k^{+}=\{x+iy: x>0, \quad  2\pi(k-1)<y<2\pi k\}.
\end{split}
\end{equation*}
The function $U$ will be discontinuous. In order to obtain a continuous function we consider the function $\psi_k:[0,\infty)\to[0,\infty)$ defined by $u_{m_{k+1},n_{k+1}}(x)=u_{m_k,n_k}(\psi_k(x))$. The function $\psi_k$ is closely related to the function $\phi$ considered in Lemma~\ref{Lemma4} and Lemma~\ref{Lemma5}. In fact, denoting by $\phi_k$ the function $\phi$ corresponding to $n_k$ and $n_{k+1}$, then
\begin{equation*}
\begin{split}
\psi_k(x)=\phi_k(x+s_{m_{k+1},n_{k+1}})-s_{m_{k},n_{k}}.
\end{split}
\end{equation*}
We then define $U:\{z\in\mathbb{C}: \Re z\geq 0\}\to \mathbb{C}$ by interpolating between $u_{m_{k+1},n_{k+1}}$ and $u_{m_{k},n_{k}}$ as follows: If $2\pi (k-1)\leq y<2\pi k$, say $y=2\pi (k-1)+2\pi t$ where $0\leq t<1$, then we put
\begin{equation*}
\begin{split}
U(x+iy)=u_{m_{k},n_{k}}((1-t)x+t\psi_k(x)+iy)=u_{m_{k},n_{k}}(x+iy+t(\psi_k(x)-x)).
\end{split}
\end{equation*}
Actually, by $2\pi i$-periodicity we have
\begin{equation*}
\begin{split}
U(x+iy)=u_{m_{k},n_{k}}((1-t)x+t\psi_k(x)+2\pi it).
\end{split}
\end{equation*}
The function $U$ defined this way is continuous in the right-half plane.

We now define a function $V$ in the left half-plane. In order to do so, we define
\begin{equation*}
\begin{split}
v_{m,n}:\{z\in\mathbb{C}: \Re z< 0\}\to \mathbb{C},  \quad v_{m,n}(z)=g_{m,n}\left(\frac{z}{m+2n+1}+s_{m,n}\right).
\end{split}
\end{equation*}
Note that $v_{m,n}$ maps $(-\infty,0]$ monotonically onto $(1,2]$.

Let $(n_k)$ and $(m_k)$ be as before and put $\mathcal{N}_k=\sum_{j=1}^{k}N_j$. This time we would like to define $V(z)=v_{m_k,n_k}(z)$ in the half-strip
\begin{equation*}
\begin{split}
\{x+iy: x<0, \ 2\pi \mathcal{N}_{k-1}\leq y<2\pi \mathcal{N}_k\},
\end{split}
\end{equation*}
but again this function would be discontinuous. In order to obtain a continuous function we again interpolate between $v_{m_{k+1},n_{k+1}}$ and $v_{m_k,n_k}$. Similarly as before we consider the map $\phi_k:(-\infty,0]\to (-\infty,0]$ defined by $v_{m_{k+1},n_{k+1}}(x)=v_{m_k,n_k}(\psi_k(x))$. Then we define $V:\{z\in \mathbb{C}:
\Re z\leq 0\}\to \mathbb{C}$ by interpolating between $v_{m_{k+1},n_{k+1}}$ and $v_{m_k,n_k}$ as follows: If $2\pi \mathcal{N}_{k-1}\leq y<2\pi \mathcal{N}_k$, say $y=2\pi \mathcal{N}_{k-1}+2\pi N_kt$ where $0\leq t<1$, then we put
\begin{equation*}
\begin{split}
V(x+iy)=v_{m_{k},n_{k}}((1-t)x+t\psi_k(x)+iy)=v_{m_{k},n_{k}}(x+iy+t(\psi_k(x)-x)).
\end{split}
\end{equation*}
This map $V$ is continuous in the left half-plane.

Now we define our map $G$ by gluing $U$ and $V$ along the imaginary axis. In order to do this we note that by construction we have $U(iy)=V(i(h(y)+\mathfrak{h}(y)))$ and thus $U(ig(y))=V(iy^{\gamma})$ for $y\geq 0$, with the maps $h$, $\mathfrak{h}$ and $g$ from Lemma~\ref{Lemma1} and Lemma~\ref{Lemma2}. Therefore, we will consider a homeomorphism $Q$ of the right half-plane
\begin{equation*}
\begin{split}
H^{+}=\{z\in\mathbb{C}:\Re z\geq 0\}
\end{split}
\end{equation*}
onto itself, satisfying $Q(\bar{z})=\overline{Q(z)}$, such that $Q(\pm iy)=\pm ig(y)$ for $y\geq 0$ while $Q(z)=z$ for $\Re z\geq 1$. We thus have to define $Q(z)$ in the strip $\{z\in\mathbb{C}:0<\Re z<1\}$. For $|\Im z|\geq 1$ we define $Q$ in this strip by interpolation; that is, we put
\begin{equation*}
\begin{split}
Q(x\pm iy)=x\pm i((1-x)g(y)+xy)  \quad
 \text{if} \quad 0<x<1 \quad \text{and} \quad y\geq 1.
\end{split}
\end{equation*}
In the remaining part of the strip we define $Q$ by
\begin{equation*}
\begin{split}
Q(z)=\left\{
       \begin{array}{ll}
         z|z|^{\gamma-1}, & \text{if} \quad 0<|z|<1; \\
         z,               & \text{if} \quad |z|>1, \quad \text{but} \quad  0<\Re z<1 \quad \text{and} \quad 0\leq |\Im z|\leq 1.
       \end{array}
     \right.
\end{split}
\end{equation*}
Note that $h(y)+\mathfrak{h}(y)=y$ for $0\leq y\leq 2\pi$ since $m_1=n_1=0$ implies that $N_1=1$, and thus $g(y)=y^{\gamma}$ for $0\leq y<1$. Thus $Q(\pm i y)=\pm ig(y)$ also for $|y|\leq 1$. Moreover, we have $g(1)=1$, meaning that the above expressions for $Q(z)$ do indeed coincide for $|\Im z|=1$. We conclude that the map $W=U\circ Q$ satisfies
\begin{equation*}
\begin{split}
W(\pm iy)=U(\pm ig(y))=V(\pm iy^{\gamma}) \quad \text{for} \quad y\geq 0.
\end{split}
\end{equation*}

Let now $\rho\in(1/2,1]$. We choose the constant $\gamma=1/(2\rho-1)$ and put $\sigma=\rho/(2\rho-1)$. We see that $\gamma\geq1$. Then the map
\begin{equation*}
\begin{split}
G(z)=\left\{
       \begin{array}{ll}
         W(z^{\rho}),         & \text{if} \quad |\arg z|\leq \frac{\pi}{2\rho}; \\
         V(-(-z)^{\sigma}),   & \text{if} \quad |\arg(-z)|\leq \frac{\pi}{2\sigma},
       \end{array}
      \right.
\end{split}
\end{equation*}
is continuous in $\mathbb{C}$. Here, for $\mu>0$, we denote by $z^{\mu}$ the principal branch of the power which is defined in $\mathbb{C}\setminus(-\infty,0]$.

\subsection{Estimation of the dilatation}\label{Estimation of the dilatation} % use lowercase except for proper names

With the definition in subsection~\ref{Definition of a quasimeromorphic map}, we now estimate the dilatation. After making some changes to the notations, the process will be the same as that in \cite[Section~3.3]{Bergweilereremenko2019}, except that we have a slightly different error term when applying Lemma~\ref{Lemma5}.

In the case $\gamma>1$, together with the estimates in \eqref{recuree1pm71}--\eqref{recuree1pm109}, we may replace each error term $O(1/k^{\delta})$, where $\delta>0$ is a constant, appearing in \cite[Section~3.3]{Bergweilereremenko2019} by $O(1/(\log(k+2))^2)$. Then we may replace any sum of the type $\sum_{k=1}^{\infty}1/k^{1+\delta}$ by a sum of the type $\sum_{k=1}^{\infty}1/k(\log(k+2))^2$, which also converges.

In the case $\gamma=1$, we have $n_1=0$ and $n_k=3$ for all $k\geq 1$. By the comments below Theorem~\ref{maintheorem2}, we only need to consider the case $0<\delta\gamma<1$. In this case the notation $\beta(k)$ used in the proof of Lemma~\ref{Lemma3}--Lemma~\ref{Lemma5} in subsection~\ref{Preliminary lemmas} shall be replaced by a bounded term $O(1)$. Now, since $m_k=0$ or $m_k=1$, it is possible that $M>N$, $M=N$ or $M<N$. In all these three cases, it is easy to check that the estimates in Lemma~\ref{Lemma3} and Lemma~\ref{Lemma3 trans} hold with the new definition of $\beta(k)$. We note that the increasing diffeomorphism $\phi$ in \eqref{diffeo} is just the identity when $M=N$. Further, when $\mathfrak{m}>m$ or $\mathfrak{m}<m$, by the equation \eqref{recuree1pm32fu2} we easily show that the fixed point of $\phi(x)$, say $p$, satisfies $p=O(1)$. Recall from Lemma~\ref{Lemma2} that the subsequence $(m_{k_i})$ such that $m_{k_i}=1$ satisfies $k_{i}\geq c i^{1/\delta\gamma}$ for a fixed positive constant $c$. Then, after making some slightly modifications to Lemma~\ref{Lemma5}, we may replace any sum of the type $\sum_{k=1}^{\infty}1/k^{1+\delta}$ in \cite[Section~3.3]{Bergweilereremenko2019} by a sum of the type $\sum_{i=1}^{\infty}1/k_i$, which also converges.

By the above discussions, below we only recall some notations that will be used in the following proof and the calculations will be omitted. For a quasimeromorphic map $f$, let
\begin{equation*}
\begin{split}
\mu_f(z)=\frac{f_{\bar{z}}(z)}{f_z(z)} \quad \text{and} \quad K_f(z)=\frac{1+|\mu_f(z)|}{1-|\mu_f(z)|}.
\end{split}
\end{equation*}
We have to estimate $K_G(z)-1$. Write
\begin{equation}\label{recuree1pm59}
\begin{split}
q(x+iy)=x+iy+t(\psi_k(x)-x)=x+iy+\left(\frac{y}{2\pi}-(k-1)\right)(\psi_k(x)-x).
\end{split}
\end{equation}
Thus
\begin{equation}\label{recuree1pm60}
\begin{split}
q_z(z)=1+a(z)-ib(z) \quad \text{and} \quad q_{\bar{z}}=a(z)+ib(z),
\end{split}
\end{equation}
with
\begin{equation}\label{recuree1pm61}
\begin{split}
a(x+iy)=\frac{t}{2}(\psi_k'(x)-1) \quad \text{and} \quad b(x+iy)=\frac{1}{4\pi}(\psi_k(x)-x).
\end{split}
\end{equation}
Then from the proof in \cite[Section~3.3]{Bergweilereremenko2019} we deduce that
\begin{equation}\label{recuree1pm69}
\begin{split}
K_U(z)-1 \leq \frac{4(1+r(x))r(x)}{\min\{1,\psi_k'(x)\}},
\end{split}
\end{equation}
where
\begin{equation}\label{recuree1pm68}
\begin{split}
r(x)=|\psi_k'(x)-1|+|\psi_k(x)-x|.
\end{split}
\end{equation}
Then we can show that $K_U(z)-1<\infty$, which together with \eqref{recuree1pm36} and \eqref{recuree1pm42} shows that $U$ is quasimeromorphic in the right half-plane $H^+$. Moreover, by using \eqref{recuree1pm16} and \eqref{recuree1pm17} and $g'(y)=\gamma x^{\gamma-1}/N$ we may show that the function $Q$ defined in subsection~\ref{Definition of a quasimeromorphic map} is quasiconfromal so that $W=U\circ Q$ is quasimeromorphic in $H^+$.

On the other hand, to obtain the estimate of $K_G(z)$ for $\arg(-z)<\pi/(2\sigma)$, we have $V(z)=V_{m_k,n_k}(q(z))$ where, instead of \eqref{recuree1pm59}, we have
\begin{equation*}
\begin{split}
q(x+iy)=x+iy+\frac{1}{N_k}\left(\frac{y}{2\pi}-\mathcal{N}_{k-1}\right)\left(\psi_k(x)-x\right)
\end{split}
\end{equation*}
for $2\pi \mathcal{N}_{k-1}\leq y <2\pi \mathcal{N}_k$, with
\begin{equation*}
\begin{split}
\psi_k(x)=N_k\phi\left(\frac{x}{N_{k+1}}+s_{m_{k+1}.n_{k+1}}\right)-N_ks_{m_k,n_k}.
\end{split}
\end{equation*}
Now \eqref{recuree1pm60} holds with $a(x+iy)$ as in \eqref{recuree1pm61}, but
\begin{equation*}
\begin{split}
b(x+iy)=\frac{1}{4\pi N_k}\left(N_k\phi_k\left(\frac{x}{N_{k+1}}\right)-x\right).
\end{split}
\end{equation*}
Instead of \eqref{recuree1pm68} and \eqref{recuree1pm69} we obtain
\begin{equation*}
\begin{split}
K_V(z)-1 &\leq \frac{4(1+r(x))r(x)}{\min\{1,\phi_k'(x)\}}
\end{split}
\end{equation*}
with
\begin{equation*}
\begin{split}
r(x)=\left|\psi'_k(x)-1\right|+\frac{1}{N_k}|\psi_k(x)-x|.
\end{split}
\end{equation*}
Then, as in \cite[Section~3.3]{Bergweilereremenko2019} we can finally show that, if $r>0$,
\begin{equation*}
\begin{split}
\int_{|z|>r}\frac{K_G(z)-1}{x^2+y^2}dxdy <\infty.
\end{split}
\end{equation*}
Thus, in both of the two cases $\gamma>1$ and $\gamma=1$, $G$ satisfies the hypothesis of the Teichm\"uller--Wittich--Belinskii theorem \cite[\S~V.6]{lehtoVirtanen2008}. This theorem, together with the existence theorem for quasiconformal mappings \cite[\S~V.1]{lehtoVirtanen2008}, yields that there exists a quasiconformal homeomorphism $\tau:\mathbb{C}\to\mathbb{C}$ and a meromorphic function $F$ such that
\begin{equation}\label{recuree1pm117}
\begin{split}
G(z)=F(\tau(z)) \quad \text{and} \quad \tau(z)\sim z \quad \text{as} \quad z\to\infty.
\end{split}
\end{equation}

\subsection{Completion of the proof}\label{Completion of the proof} % use lowercase except for proper names

We now begin to estimate the counting functions of the zeros and poles of $F$, respectively. For two positive quantities $S_1(r)$ and $S_2(r)$, below we shall use the notation $S_1(r)\asymp S_2(r)$, provided that there are two positive constants $d_1$ and $d_2$ such that $d_2 S_2(r)\leq S_1(r)\leq d_1S_2(r)$.

Let $r>0$ and choose $k\in \mathbb{N}$ such that $2\pi(k-1)<r\leq 2\pi k$. It follows from the construction of $(n_k)$ and $(m_k)$ and \eqref{recuree1pm109} that
\begin{equation}\label{recuree1pm118}
\begin{split}
n(r,0,U)\leq 4\sum_{j=1}^{k}n_j \sim 2\mathcal{N}_k \sim 2(2\pi)^{\gamma-1}k^{\gamma}\leq d_1r^{\gamma}
\end{split}
\end{equation}
for some positive constant $d_1$. On the other hand, we also have
\begin{equation*}
\begin{split}
n(r,0,U)\geq \sum_{j=1}^{k}n_j \sim \frac{1}{2}\mathcal{N}_k \sim \frac{1}{2}(2\pi)^{\gamma-1}k^{\gamma}\geq d_2r^{\gamma}
\end{split}
\end{equation*}
for some positive constant $d_2$. Therefore, $n(r,0,U)\asymp r^{\gamma}$. Similarly, if $2\pi\mathcal{N}_{k-1} <r \leq 2\pi \mathcal{N}_k$, then
\begin{equation*}
\begin{split}
n(r,0,V)\leq 4\sum_{j=1}^{k}n_j \sim 2\mathcal{N}_k \sim 2(2\pi)^{\gamma-1}k^{\gamma}\leq d_3r
\end{split}
\end{equation*}
for some positive constant $d_3$ and also that
\begin{equation*}
\begin{split}
n(r,0,V)\geq \sum_{j=1}^{k}n_j \sim \frac{1}{2}\mathcal{N}_k \sim \frac{1}{2}(2\pi)^{\gamma-1}k^{\gamma}\geq d_4r
\end{split}
\end{equation*}
for some constant $d_4$. Therefore, $n(r,0,V)\asymp r$. Since $\sigma=\rho\gamma$, we conclude that $n(r,0,G)\asymp r^{\rho\gamma}$. Now \eqref{recuree1pm117} yields $n(r,0,F)\asymp r^{\rho\gamma}$ and hence $N(r,0,F)\asymp r^{\rho\gamma}$ as $r\to\infty$.

Recall that the function $g(x)$ in Lemma~\ref{Lemma1} satisfies $g(x)=x(1+o(1))$ as $x\to\infty$ and also that $x(1+o(1))/3\leq g(x)\leq x(1+o(1))$ as $x\to\infty$ when $\gamma=1$. When $1<\delta\gamma\leq \gamma$, by the same arguments as above we have $N(r,\infty,F)\asymp (\log r)^{-2}r^{\delta\rho\gamma}$; when $0\leq \delta\gamma\leq 1$, similarly as for equation \eqref{recuree1pm118} we have $N(r,\infty,F)\leq d_5r^{\delta\rho\gamma}$ for a positive constant $d_5$ and, moreover, there is an infinite sequence $(r_n)$ such that $N(r_n,\infty,F)\geq d_6r_n^{\delta\rho\gamma}$ for a constant $d_6$.
%In both of these two cases, we have $N(r,0,F)\asymp r^{\rho\gamma}$.

Next we note that there is a set $\Omega_1$ with finite linear measure such that, for all $|z|\not\in \Omega_1$, the functions $P_n(e^{z})$ and $Q_{n}(e^z)$ satisfy
\begin{equation*}
\left\{
\begin{array}{ll}
\exp(-(2n+\varepsilon)r)\leq\left|P_n(e^{z})\right|\leq \exp((2n+\varepsilon)r), &\\
\exp(-(m+\varepsilon)r)\leq\left|Q_m(e^{z})\right|\leq \exp((m+\varepsilon)r), &
\end{array}
\right.
\end{equation*}
respectively, where $\varepsilon>0$ is a small constant. This implies that
\begin{equation*}
\begin{split}
|g_{m,n}(z)|\leq \exp((m+\varepsilon)r+(2n+\varepsilon)r)\left|\exp(e^z)\right|
\end{split}
\end{equation*}
for all $z\in \mathbb{C}$ such that $|z|\not\in \Omega_1$. For simplicity, we denote $L=\max\{m,2n\}$ and $L_k=\max\{m_k,2n_k\}$. Clearly this implies that there is a set $\Omega_2$ of finite linear measure such that
\begin{equation*}
\left\{
\begin{array}{ll}
|v_{m,n}(z)|\leq \exp((L+\varepsilon)r)\left|\exp(e^z)\right| \quad \text{for} \quad z\in H^{-} \quad \text{and} \quad |z|\not\in \Omega_2, &\\
|u_{m,n}(z)|\leq \exp((L+\varepsilon)r)\left|\exp(e^z)\right| \quad \text{for} \quad z\in H^{+} \quad \text{and} \quad |z|\not\in \Omega_2. &
\end{array}
\right.
\end{equation*}

Let $x+iy\in H^{-}$ with $\Im z\geq 0$ and $k\in \mathbb{N}$ with $2\pi \mathcal{N}_{k-1}\leq \Im z<2\pi \mathcal{N}_k$. With $t=(y-2\pi \mathcal{N}_{k-1})/{2\pi N_k}$ we have $0\leq t<1$ and
\begin{equation*}
\begin{split}
|V(x+iy)|=|v_{m_k,n_k}((1-t)x+t\psi_k(x)+iy)|\leq \exp(L_k+\varepsilon)r).
\end{split}
\end{equation*}
Thus
\begin{equation}\label{recuree1pm125}
\begin{split}
|V(z)|\leq \exp((L_k+\varepsilon)r),
\end{split}
\end{equation}
for $z\in H^{-}$ and $|z|\not\in \Omega_2$. For $z\in H^{+}$ and $|z|\not\in \Omega_2$ we have
\begin{equation}\label{recuree1pm126}
\begin{split}
|g_{m,n}(z)|\leq g_{m,n}(\Re z)\leq \exp((L+\varepsilon)r)\exp\left(e^{\Re z}\right).
\end{split}
\end{equation}
Again, assuming that $\Im z\geq 0$, we choose $k\in \mathbb{N}$ such that $2\pi (k-1)\leq \Im z <2\pi k$. Then
\begin{equation*}
\begin{split}
|U(z)|=|u_{m_k,n_k}(q(z))|=|g_{m_k,n_k}(q(z)+s_{m_k,n_k})|\leq \exp((L_k+\varepsilon)r)\exp\left(e^{\Re z}\right),
\end{split}
\end{equation*}
where $q(z)$ is defined by \eqref{recuree1pm59}. Noting that $m_k+2n_k=O(k^{\gamma-1})=O(|z|^{\gamma-1})$ by \eqref{recuree1pm18} and thus, by Lemma~\ref{Lemma3 trans},
\begin{equation}\label{recuree1pm128}
\begin{split}
s_{m_k,n_k}=\log N_k+O(1)=\log (m_k+2n_k)+O(1)=O(\log |z|),
\end{split}
\end{equation}
we deduce from \eqref{recuree1pm36} and \eqref{recuree1pm126} that
\begin{equation*}
\begin{split}
\log|U(z)| \leq&\ (L_k+\varepsilon)r+\exp(\Re q(z)+s_{m_k,n_k})\\
\leq&\ \exp((1+o(1))z)+O(|z|^{\gamma})\leq \exp((1+o(1))z).
\end{split}
\end{equation*}
Together with \eqref{recuree1pm125} we conclude that
\begin{equation*}
\begin{split}
\log|G(z)|\leq \exp((1+o(1))z^{\rho})
\end{split}
\end{equation*}
as $|z|\to\infty$ outside the set $\Omega_2$. Hence \eqref{recuree1pm117} yields that
\begin{equation*}
\begin{split}
\log|F(z)|\leq \exp((1+o(1))z^{\rho})
\end{split}
\end{equation*}
as $|z|\to\infty$ outside an exceptional set $\Omega_3$ of finite linear measure. Then, together with \eqref{recuree1pm133}, the lemma on the logarithmic derivative \cite{Hayman1964Meromorphic,Laine1993} now implies that $E=F/F'$ satisfies
\begin{equation*}
\begin{split}
m\left(r,\frac{1}{E}\right)=O(r^{\rho}),
\end{split}
\end{equation*}
outside a set of finite linear measure. By the relation $N(r,\infty,F)\leq d_5(\log r)^{-2}r^{\delta\rho\gamma}$ and $N(r,0,F)\asymp r^{\rho\gamma}$, and since the zeros and poles of $F$ are simple, we have
\begin{equation}\label{recuree1pm133}
\begin{split}
N\left(r,0,E\right)\asymp r^{\rho\gamma}.
\end{split}
\end{equation}
We conclude that
\begin{equation*}
\begin{split}
T\left(r,E\right)\asymp r^{\rho\gamma}.
\end{split}
\end{equation*}
In particular, $E$ has finite order. $E$ is clearly a Bank--Laine functions and thus $E$ is the product of two solutions of \eqref{bank-laine0}. The lemma on the logarithmic derivative, together with \eqref{recuree1pm7} and \eqref{recuree1pm133} also implies that
\begin{equation*}
\begin{split}
m(r,A)=2m\left(r,\frac{1}{E}\right)+O(\log r)=O(r^{\rho}).
\end{split}
\end{equation*}
We thus have $\lambda(E)=\rho(E)=\rho\gamma$ and $\rho(A)\leq \rho$. Since $1/\rho+1/\rho\gamma=2$ by the definition of $\gamma$, we deduce from \eqref{bank-laine1} that actually $\rho(A)=\rho$. This completes the proof of Theorem~\ref{maintheorem2}.

\section{Proof of Theorem~\ref{maintheorem3}}\label{Proof of maintheorem3} % use lowercase except for proper names

Recall from the introduction that $A_i=\frac{2n!}{(i+2n)!}$, $i=1,\cdots,m$ and $B_j=\frac{(-1)^jm!}{(m+j)!}$, $j=1,\cdots,2n$. We note that for $|z|>B_{2n-1}/B_{2n}=m+2n$,
\begin{equation}\label{recuree1pm137}
\begin{split}
\left|P_n(z)-B_{2n}z^{2n}\right|=&\ \left|\sum_{j=0}^{2n-1}B_jz^j\right|\leq\sum_{j=0}^{2n-1}\left|B_j\right||z|^j=\sum_{j=0}^{2n-1}\frac{\left|B_j\right||z|^{2n-1}}{|z|^{2n-1-j}}\\
&\ \leq \left|B_{2n-1}\right||z|^{2n-1}\sum_{j=0}^{2n-1}\frac{1}{2^{2n-1-j}}\leq \frac{B_{2n-1}}{B_{2n}|z|}(B_{2n}|z|^{2n})
\end{split}
\end{equation}
and similarly
\begin{equation}\label{recuree1pm138}
\begin{split}
\left|Q_m(z)-A_mz^m\right| \leq \frac{A_{m-1}}{A_{m}|z|}(A_{m}|z|^{m})
\end{split}
\end{equation}
and thus in particular $R_{m,n}(z)\not=0,\infty$ for $|z|>m+2n$. Then we can follow the way as in \cite[Section~3.4]{Bergweilereremenko2019} to estimate the asymptotics of $F$, $E$ and $A$.
%more accurately than in the previous proof will be the same  and thus will be omitted.

Let $0<\varepsilon_1<\varepsilon_2<\varepsilon_3<\varepsilon<1$ and, for $l\in\{1,2\}$, put
\begin{equation*}
\begin{split}
H_{\varepsilon_l}^{+}=\left\{z\in\mathbb{C}: \quad |\arg z|\leq (1-\varepsilon_l)\frac{\pi}{2}  \right\}.
\end{split}
\end{equation*}
Given $z\in H_{\varepsilon_1}^{+}$ with $\Im z>0$, we choose $k\in \mathbb{N}$ with $2\pi (k-1)\leq \Im z<2\pi k$. We then have $k=O(\Re z)$ and hence, by \eqref{recuree1pm18}, $\log (m_k+2n_k)=o(\Re z)$ and $m_k+2n_k=o(e^{z})$ as $z\to\infty$ in $H_{\varepsilon_1}^{+}$. We deduce from \eqref{recuree1pm137} and \eqref{recuree1pm138} that
\begin{equation*}
\begin{split}
R_{m_k,n_k}(e^z) \sim \frac{B_{2n_k}}{A_{m_k}}e^{(2n_k-m_k)z}.
\end{split}
\end{equation*}
Recall that $A_{m_k}B_{2n_k}=\frac{m_k!(2n_k)!}{[(m_k+2n_k)!]^2}$. By looking at the proof of Lemma~\ref{Lemma1} and Lemma~\ref{Lemma2}, and together with Stirling's formula, we have
\begin{equation*}
\begin{split}
\log R_{m_k,n_k}(e^z)\sim2\log B_{2n_k}-\log (A_{m_k}B_{2n_k})+(2n_k-m_k)z\sim (2n_k-m_k)z.
\end{split}
\end{equation*}
It follows that
\begin{equation*}
\begin{split}
\log g_{m_k,n_k}(z)=(2n_k-m_k)(1+o(1))z+e^z \sim e^z \quad \text{as} \quad z\to\infty \quad \text{in} \quad H_{\varepsilon_1}^+.
\end{split}
\end{equation*}
With $q(z)$ as in \eqref{recuree1pm59} we have
\begin{equation*}
\begin{split}
\log U(z)=\log u_{m_k,n_k}(q(z))=\log g_{m_k,n_k}(q(z)+s_{m_k,n_k})
\end{split}
\end{equation*}
and $q(z)+s_{m_k,n_k}\sim z$ as $z\to\infty$ in $H_{\varepsilon_2}^+$ by \eqref{recuree1pm128}. Thus $q(z)+s_{m_k,n_k}\in H_{\varepsilon_1}^+$ if $z\in H_{\varepsilon_2}^+$ and $|z|$ is sufficiently large and we deduce from the last two equations that
\begin{equation*}
\begin{split}
\log U(z) \sim \exp(q(z)+s_{m_k,n_k})=\exp((1+o(1))z) \quad \text{as} \quad z\to\infty \quad \text{in} \quad H_{\varepsilon_2}^+
\end{split}
\end{equation*}
and thus
\begin{equation*}
\begin{split}
\log G(z)=\log U(z^{\rho})=\exp((1+o(1))z^{\rho}) \quad \text{for} \quad |\arg z|<(1-\varepsilon_2)\frac{\pi}{2\rho}.
\end{split}
\end{equation*}
This implies that
\begin{equation*}
\begin{split}
\log F(z)=\log G(\tau^{-1}(z))=\exp((1+o(1))z^{\rho}) \quad \text{for} \quad |\arg z|<(1-\varepsilon_3)\frac{\pi}{2\rho}
\end{split}
\end{equation*}
and hence
\begin{equation}\label{recuree1pm147}
\begin{split}
\log\log F(z)\sim z^{\rho} \quad \text{for} \quad |\arg z|<(1-\varepsilon_3)\frac{\pi}{2\rho}
\end{split}
\end{equation}
as $|z|\to\infty$. An asymptotic equality like \eqref{recuree1pm147} can be differentiated by passing to a smaller sector. If $|\arg z|<(1-\varepsilon_4)\pi/(2\rho)$, then $\{\zeta: |\zeta-z|=c|z|\}$ is contained in $\{\zeta: |\arg z|<(1-\varepsilon_3)\pi/(2\rho)\}$ for sufficiently small $c$. Thus we can obtain \eqref{recuree1pm3 fu2} by following exactly the same process as in \cite[Section~3.4]{Bergweilereremenko2019}. We omit those details.

The proof of \eqref{recuree1pm3 fu3} and \eqref{recuree1pm3 fu4} is similar. Note that $N=m+2n+1$. From the proof of Lemma~\ref{Lemma3} we have
\begin{equation*}
\begin{split}
h_{m,n}(z)-1= \frac{1}{\binom{m+2n}{m}N!}\frac{e^zz^{N}}{Q_m(z)^2}\left(1-U(z)\right),
\end{split}
\end{equation*}
where $U(z)$ satisfies
\begin{equation*}
\begin{split}
|U(z)|\leq \left(1+\frac{m}{1+2n}\right)\left(\left|\frac{z}{z+N-2m}\right|+\left|\frac{12z}{(z+N-2m)^2}\right|\right).
\end{split}
\end{equation*}
Then, by the estimates in \eqref{recuree1pm71} and \eqref{recuree1pm72}, we have
\begin{equation*}\label{recuree1pm155}
\begin{split}
h_{m,n}(z)=1+\frac{z^{N}}{\binom{m+2n}{m}N!}+O(z^{N+1})=1+(1+O(z))\frac{z^{N}}{\binom{m+2n}{m}N!}
\end{split}
\end{equation*}
as $z\to0$. Thus, by Lemma~\ref{Lemma3 trans} together with Stirling's formula we have
\begin{equation*}
\begin{split}
\log(v_{m,n}-1)&=\log \left(g_{m,n}\left(\frac{z}{N}+s_{m,n}\right)-1\right)=\log \left(h_{m,n}\left(e^{\frac{z}{N}+s_{m,n}}\right)-1\right)\\
&=-\log\binom{m+2n}{m}N!+z+Ns_{m,n}+\log(1+O(e^{\frac{z}{N}}))\\
&=z+\log(1+O(e^{\frac{z}{N}}))+O(N\log N).
\end{split}
\end{equation*}
Similarly as before, for $l\in\{1,2\}$, we consider the sectors
\begin{equation*}
\begin{split}
H_{\varepsilon_l}^{-}=\left\{z\in\mathbb{C}: \quad |\arg (-z)|\leq (1-\varepsilon_l)\frac{\pi}{2}  \right\}.
\end{split}
\end{equation*}
For $z\in H_{\varepsilon_1}^{-}$ with $\Im z>0$ we choose $k\in \mathbb{N}$ with $2\pi\mathcal{N}_{k-1}\leq
\Im z<2\pi \mathcal{N}_k$. We can deduce from \eqref{recuree1pm18} and \eqref{recuree1pm109} that
$\Re z/N_{k-1}+s_{m_k,n_k}\to-\infty$ as $z\to\infty$ in $z\in H_{\varepsilon_1}^{-}$. This implies that $e^{z/N_{k-1}+s_{m_k,n_k}}\to 0$ and hence $O(e^{z/N_{k-1}+s_{m_k,n_k}})\to 0$ as $z\to\infty$ in $z\in H_{\varepsilon_1}^{-}$. Moreover, $N_k\log N_k=o(|z|)$ as $z\to\infty$ in $z\in H_{\varepsilon_1}^{-}$, again by \eqref{recuree1pm72} and \eqref{recuree1pm109}. It follows that $\log(v_{m_k,n_k}-1)\to z$ as $z\to\infty$ in $z\in H_{\varepsilon_1}^{-}$. Then we can obtain \eqref{recuree1pm3 fu4} by following exactly the same process as in \cite[Section~3.4]{Bergweilereremenko2019}. We omit those details.

\end{document}